\newtheorem{theorem}{Theorem}
\newtheorem{condition}[theorem]{Condition}
\newtheorem{proposition}[theorem]{Proposition}
\newtheorem{remark}[theorem]{Remark}
\newenvironment{proof}[1][Proof]{\noindent\textbf{#1.} }{\ \rule{0.5em}{0.5em}}
\newdimen\dummy
\begin{document}

\title{Spin Needlets Spectral Estimation }
\author{Daryl Geller$^{\ast }$, Xiaohong Lan$^{\ast \ast }$ and Domenico
Marinucci$^{\ast \ast }$ \\
* Department of Mathematics, Stony Brook University and \\
** Department of Mathematics, University of Rome Tor Vergata}
\maketitle

\begin{abstract}
We consider the statistical analysis of random sections of a
\emph{spin fibre bundle }over the sphere. These may be thought of as
random fields that at each point $p\in \mathbb{S}^{2}$ take as a
value a curve (e.g. an ellipse) living in the tangent plane at that
point $T_{p}\mathbb{S}^{2},$ rather than a number as in ordinary
situations$.$ The analysis of such fields is strongly motivated by
applications, for instance polarization experiments in Cosmology. To
investigate such fields, spin needlets were recently introduced by
\cite{gelmar} and \cite{ghmkp}. We consider the use of spin needlets
for spin angular power spectrum estimation, in the presence of noise
and missing observations, and we provide Central Limit Theorem
results, in the high frequency sense; we discuss also tests for bias
and asymmetries with an asymptotic justification.

\begin{itemize}
\item Keywords and Phrases: Spin Random Fields, Spin Needlets, CMB
Polarization, Angular Power Spectrum Estimation, Fibre Bundles

\item AMS Subject Classification: 60G60, 62M15, 42C40, 33C55, 58J05
\end{itemize}
\end{abstract}

\section{Introduction}

The analysis of (random or deterministic) functions defined on the sphere by
means of wavelets has recently been the object of a number of theoretical
and applied papers, see for instance \cite{antoine}, \cite{antodem}, \cite%
{jfaa1,jfaa3}, \cite{npw1,npw2}, \cite{gm1,gm2,gm3}, \cite{bkmpAoS,bkmpBer}, %
\cite{guilloux}. Many of these works have found their motivating rationale
in recent developments in the applied sciences, such as Medical Imaging,
Geophysics, Atmospheric Sciences, Astrophysics and Cosmology. These same
fields of applications are now prompting stochastic models which are more
sophisticated (and more intriguing) than ordinary, scalar valued random
fields. In this paper, we shall be especially concerned with astrophysical
and cosmological applications, but several similar issues can be found in
other disciplines, see for instance \cite{schwartzman} for related
mathematical models in the field of brain mapping.

Concerning astrophysics, there are now many mathematical papers which have
been motivated by the analysis of so-called Cosmic Microwave Background
radiation (CMB); the latter can be very loosely viewed as a relic
electromagnetic radiation which permeates the Universe providing a map of
its status from 13.7 billions ago, in the immediate adjacency of the Big
Bang. Almost all mathematical statistics papers in this area have been
concerned with the temperature component of CMB, which can be represented as
a standard spherical random field (see \cite{cmaas} for a review). We recall
that a scalar random field on the sphere may be thought of as a collection
of random variables $\left\{ T(p):p\in \mathbb{S}^{2}\right\} $, where $%
\mathbb{S}^{2}=\left\{ p:\left\| p\right\| ^{2}=1\right\} $ is the unit
sphere of $\mathbb{R}^{3}$ and $\left\| .\right\| $ denotes Euclidean norm. $%
T(p)$ is isotropic if its law is invariant with respect to the group of
rotations, $T(p)\overset{d}{=}T(gp)$ for all $g\in SO(3),$ where $\overset{d}%
{=}$ denotes equality in distribution of random fields and $SO(3)$ can be
realized as the set of orthonormal $3\times 3$ matrices with unit
determinant.

However, most recent and forthcoming experiments (such as Planck, which was
launched on May 14, 2009, the CLOVER, QUIET and QUAD experiments or the
projected mission CMBPOL) are focussing on a much more elusive and
sophisticated feature, i.e. the so-called polarization of CMB. The physical
significance of the latter is explained for instance in \cite%
{ck05,kks96,selzad}; we do not enter into these motivations here, but we do
stress how the analysis of this feature is expected to provide extremely
rewarding physical information. Just to provide a striking example,
detection of a non-zero angular power spectrum for the so-called $B$-modes
of polarization data (to be defined later) would provide the first
experimental evidence of primordial gravitational waves; this would result
in an impressive window into the General Relativity picture of the
primordial Big Bang dynamics and as such it is certainly one of the most
interesting perspectives of current physical research. Polarization is also
crucial in the understanding of the so-called reionization optical depth,
for which very little information is available from temperature data, see %
\cite{ghmkp} for more discussion on details.

Here, however, we shall not go deeper into these physical perspectives, as
we prefer to focus instead on the new mathematical ideas which are forced in
by the analysis of these datasets. A rigorous understanding requires some
technicalities which are postponed to the next Section; however we hope to
convey the general idea as follows. We can imagine that experiments
recording CMB radiation are measuring on each direction $p\in \mathbb{S}^{2}$
a random ellipse living on $T_{p}\mathbb{S}^{2},$ the tangent plane at that
point. The ``magnitude'' of this ellipse ($=c^{2}=a^{2}+b^{2}$ in standard
ellipse notation), which is a standard random variable, corresponds to
temperature data, on which the mathematical statistics literature has so far
concentrated. The other identifying features of this ellipse (elongation and
orientation) are collected in polarization data, which can be thought of as
a random field taking values in a space of algebraic curves. In more formal
terms (to be explained later), this can be summarized by saying that we
shall be concerned with random sections of fibre bundles over the sphere;
from a more group-theoretic point of view, we shall show that polarization
random fields are related to so-called spin-weighted representations of the
group of rotations $SO(3)$. A further mathematical interpretation, which is
entirely equivalent but shall not be pursued here, is to view these data as
realizations of random matrix fields (see again \cite{schwartzman}). Quite
interestingly, there are other, unrelated situations in physics where the
mathematical and statistical formalism turns out to be identical. In
particular gravitational lensing data, which have currently drawn much
interest in Astrophysics and will certainly make up a core issue for
research in the next two decades, can be shown to have the same (spin 2, see
below) mathematical structure, see for instance (\cite{bridle}). More
generally, similar issues may arise when dealing with random deformations of
shapes, as dealt with for instance by (\cite{anderes}).

The construction of a wavelet system for spin functions was first addressed
in \cite{gelmar}; the idea in that paper is to develop the needlet approach
of \cite{npw1,npw2} and \cite{gm1,gm2,gm3} to this new, broader geometrical
setting, and investigate the stochastic properties of the resulting spin
needlet coefficients, thus generalizing results from \cite{bkmpAoS,bkmpBer}.
A wide range of possible applications to the analysis of polarization data
is discussed in \cite{ghmkp}. Here, we shall focus in particular on the
possibility of using spin needlets for angular power spectrum estimation for
spin fields, an idea that for the scalar case was suggested by \cite{bkmpAoS}%
; in \cite{pbm06}, needlets were used for the estimation of cross-angular
power spectra of CMB and Large Scale Structure data, in \cite{fg08}, \cite%
{fay08} the estimator was considered for CMB temperature data in the
presence of faint noise and gaps, while in \cite{pietrobon1} the procedure
was implemented on disjoint subsets of the sphere as a probe of asymmetries
in CMB radiation.

The plan of this paper is as follows: in Section 2 we present the
motivations for our analysis, i.e. some minimal physical background on
polarization. In Section 3 and 4 we introduce the geometrical formalism on
spin line bundles and spin needlets, respectively, and we define spin random
fields. Sections 5, 6 and 7 are devoted to the spin needlets spectral
estimator and the derivation of its asymptotic properties in the presence of
missing observations and noise, including related statistical tests for bias
and asymmetries. Throughout this paper, given two positive sequences $%
\left\{ a_{j}\right\} ,\left\{ b_{j}\right\} $ we shall write $a_{j}\approx
b_{j}$ if there exist positive constants $c_{1},c_{2}$ such that $%
c_{1}a_{j}\leq b_{j}\leq c_{2}a_{j}$ for all $j\geq 1.$

\section{Motivations}

The classical theory of electromagnetic radiation entails a characterization
in terms of the so-called Stokes' parameters $Q$ and $U$, which are defined
as follows. An electromagnetic wave propagating in the $z$ direction has
components
\begin{equation}
E_{x}(z,t)=E_{0x}\cos (\tau +\delta _{x})\text{ , }E_{y}(z,t)=E_{0y}\cos
(\tau +\delta _{y})\text{ ,}  \label{light}
\end{equation}%
where $\tau :=\omega t-kz$ is the so-called propagator and $\nu =2\pi \omega
/k$ is the frequency of the wave. (\ref{light}) can be viewed as the
parametric equations of an ellipse which is the projection of the incoming
radiation on the plane perpendicular to the direction of motion. Indeed,
some elementary algebra yields
\begin{equation*}
\frac{E_{x}^{2}(z,t)}{E_{0x}^{2}}+\frac{E_{y}^{2}(z,t)}{E_{0y}^{2}}-2\frac{%
E_{x}(z,t)}{E_{0x}}\frac{E_{y}(z,t)}{E_{0y}}\cos \delta =\sin ^{2}\delta
\text{ , }\delta :=\delta _{y}-\delta _{x}\text{ .}
\end{equation*}%
The magnitude of the ellipse (i.e., the sums of the squares of its semimajor
and semiminor axes) is given by%
\begin{equation*}
T=E_{0x}^{2}+E_{0y}^{2}\text{ ;}
\end{equation*}%
$T$ has the nature of a scalar quantity, that is to say, it is readily seen
to be invariant under rotation of the coordinate axis $x$ and $y.$ It can
hence be viewed as an intrinsic quantity measuring the total intensity of
radiation; from the physical point of view, this is exactly the nature of
CMB\ temperature observations which have been the focus of so much research
over the last decade. It should be noted that, despite the non-negativity
constraint, in the physical literature on CMB experiments\ $T$ is usually
taken to be Gaussian around its mean, in excellent agreement with
observations. This apparent paradox is explained by the fact that the
variance of $T$ is several orders of magnitude smaller than its mean, so the
Gaussian approximation is justifiable.

The characterization of the polarization ellipse is completed by introducing
Stokes' parameters $Q$ and $U$, which are defined as
\begin{equation}
Q=E_{0x}^{2}-E_{0y}^{2}\text{ , }U=2E_{0x}E_{0y}\cos \delta \text{ .}
\label{stokes}
\end{equation}%
To provide a flavour of their geometrical meaning, we recall from elementary
geometry that the parametric equations of a circle are obtained from (\ref%
{light}) in the special case $E_{0x}=E_{0y},$ $\delta _{x}=\delta _{y}+\pi
/2,$ whence the circle corresponds to $Q=U=0.$ On the other hand, it is not
difficult to see that a segment aligned on the $x$ axis is characterized by $%
Q=T,$ a segment aligned on the $y$ axis by $Q=-T,$ for a segment on the line
$y=\pm x$ we have $\delta _{x}-\delta _{y}=0,\pi ,$ and hence $Q=0,$ $U=\pm
T $, respectively. The key feature to note, however, is the following: while
$T $ does not depend on any choice of coordinates, this is not the case for $%
Q$ and $U,$ i.e. the latter are not geometrically intrinsic quantities.
However, as these parameters identify an ellipse, it is natural to expect
that they will be invariant under rotations by $180^{\circ }$ degrees and
multiples thereof. This is the first step in understanding the introduction
of spin random fields below.

Indeed, it is convenient to identify $\mathbb{R}^{2}$ with the complex plane
$\mathbb{C}$ by focussing on $w=x+iy;$ a change of coordinates corresponding
to a rotation $\gamma $ can then be expressed as $w^{\prime }=\exp (i\gamma
)w,$ and some elementary algebra shows that the induced transform on $(Q,U)$
can be written as%
\begin{equation*}
\left(
\begin{array}{c}
Q^{\prime } \\
U^{\prime }%
\end{array}%
\right) =\left(
\begin{array}{cc}
\cos 2\gamma & \sin 2\gamma \\
-\sin 2\gamma & \cos 2\gamma%
\end{array}%
\right) \left(
\begin{array}{c}
Q^{\prime } \\
U^{\prime }%
\end{array}%
\right) \text{ ,}
\end{equation*}%
or more compactly
\begin{equation}
Q^{\prime }+iU^{\prime }=\exp (i2\gamma )(Q+iU)\text{ .}  \label{spinqu}
\end{equation}%
In the physicists' terminology, (\ref{spinqu}) identifies the Stokes'
parameters as spin 2 objects, that is, a rotation by an angle $\gamma $
changes their value by $\exp (i2\gamma ).$ As mentioned before, this can be
intuitively visualized by focussing on an ellipse, which is clearly
invariant by rotations of $180^{%
{{}^\circ}%
}.$ To compare with other situations, standard (scalar) random fields are
clearly invariant (or better covariant) with respect to the choice of any
coordinate axes in the local tangent plane, and as such they are spin zero
fields; a vector field is spin 1, while we can envisage random fields taking
values in higher order algebraic curves and thus having any integer spin $%
s\geq 2.$

As mentioned earlier, it is very important to notice that polarization is
not the only possible motivation for the analysis of spin random fields. For
instance, an identical formalism is derived when dealing with gravitational
lensing, i.e. the deformation of images induced by gravity according to
Einstein's laws. Gravitational lensing is now the object of very detailed
experimental studies, which have led to huge challenges on the most
appropriate statistical methods to be adopted (see for instance \cite{bridle}%
). We defer to future work a discussion on the statistical procedures which
are made possible by the application of spin needlets to lensing data.

\section{Geometric Background}

In this Section, we will provide a more rigorous background on spin
functions. Despite the fact that our motivating applications are limited to
the case $s=2,$ we will discuss here the case of a general integer $s\in
\mathbb{Z}$, which does not entail any extra difficulty.

A more rigorous point of view requires some background in Differential
Geometry, for which we refer for instance to \cite{bishop} and \cite{adtay}.
The construction of spin functions is discussed in more detail by \cite%
{gelmar}, which builds upon a well-established physical literature described
for instance in \cite{newman}, \cite{selzad}, \cite{ck05}.

Say $p\in \mathbb{S}^{2}$. We recall first the tangent plane $T_{p}\mathbb{S}%
^{2},$ which is defined as usual as the linear space generated by the
collection of tangent vectors at $p.$ To proceed further to spin random
fields, we need to recall from Geometry the notion of a fibre bundle. The
latter consists of the family $(E,B,\pi ,F)$, where $E$, $B$, and $F$ are
topological spaces and $\pi :E\rightarrow B$ is a continuous surjection
satisfying a local triviality condition outlined below. The space $B$ is
called the base space of the bundle, $E$ the total space, and $F$ the fibre;
the map $\pi $ is called the projection map (or bundle projection). In our
case, the base space is simply the unit sphere $B=\mathbb{S}^{2}$; it is
tempting to view the fibres as ellipses (or vectors, for $s=1,$ or more
general algebraic curves, for $s\geq 3)$ lying in $T_{p}\mathbb{S}^{2}$,
however one must bear in mind that to characterize the ellipse we would need
to focus jointly on $(T,Q,U),$ while our analysis below is restricted to the
Stokes' parameters $Q$ and $U.$

The basic intuition behind fibre bundles is that they behave locally as
simple Cartesian products $B\times F$. The former intuition is implemented
by requiring that for all $p\in \mathbb{S}^{2}$ there exist a neighbourhood $%
U=U(p)$ such that $\pi ^{-1}(U)$ is homeomorphic to $U\times F$, in such a
way that $\pi $ carries over to the projection onto the first factor. In
particular, the following diagram should commute:%
\begin{equation*}
\begin{array}{ccc}
\pi ^{-1}(U) & \overset{\phi }{\longmapsto } & U\times F \\
\pi \downarrow & \underset{proj}{\swarrow } &  \\
U &  &
\end{array}%
\text{ },
\end{equation*}%
where $\phi $ is a homeomorphism and $proj$ is the natural projection. The
set $\pi ^{-1}(x)$ is homeomorphic to $F$ and is called the fibre over $x.$
The fibre bundles we shall consider are smooth, that is, $E,B,$ and $F$ are
required to be smooth manifolds and all the projections above are required
to be smooth maps.

In our case, we shall be dealing with a complex line bundle which is
uniquely identified by fixing transition functions to express the
transformation laws under changes of coordinates. Following (\cite{gelmar})
(see also \cite{goldb,newman}), we define $U_{I}:=\mathbb{S}^{2}\setminus
\left\{ N,S\right\} $ to be the chart covering the sphere with the exception
of the North and South Poles, with the usual coordinates $(\vartheta
,\varphi ).$ We define also the rotated charts $U_{R}=RU_{I};$ in this new
charts, we will use the natural coordinates $(\vartheta _{R},\varphi _{R}).$
At each point $p$ of $U_{R}$, we take as a reference direction in the
tangent plane $T_{p}S^{2}$, the tangent vector $\partial /\partial \varphi
_{R}$, (which points in the direction of increasing $\varphi _{R}$ and is
tangent to a circle $\theta _{R}=$ constant). Again as in \cite{gelmar}, we
let let $\psi _{pR_{2}R_{1}}$ be the (oriented) angle from $\partial
/\partial \varphi _{R_{1}}$ to $\partial /\partial \varphi _{R_{2}}$ (for a
careful discussion of which is the oriented angle, see \cite{gelmar}); this
angle is independent of any choice of coordinates. We define a complex line
bundle on $S^{2}$ by letting $\exp (is\psi _{pR_{2}R_{1}})$ be the
transition function from the chart $U_{R_{1}}$ to $U_{R_{2}}$. A smooth spin
function $f$ is a smooth section of this line bundle. $f$ may simply be
thought of as a collection of complex-valued smooth functions $(f_{R})_{R\in
SO(3)}$, with $f_{R}$ defined and smooth on $U_{R}$,, such that for all $%
R_{1},R_{2}\in SO(3)$, we have
\begin{equation*}
f_{R_{2}}(p)=\exp (is\psi _{pR_{2}R_{1}})f_{R_{1}}(p)
\end{equation*}%
for all $p$ in the intersection of $U_{R_{1}}$ and $U_{R_{2}}$.

An alternative, group theoretic point of view can be motivated as follows.
Consider the group of rotations $SO(3);$ it is a well-known that, by
elementary geometry, each element $g$ can be expressed as
\begin{equation}
g=R_{z}(\alpha )R_{x}(\beta )R_{z}(\gamma )\text{ , }0\leq \alpha \leq \pi
\text{ , }0\leq \beta ,\gamma \leq 2\pi \text{ ,}  \label{euler}
\end{equation}%
where $R_{z}(.)$ and $R_{x}(.)$ represent rotations around the $z$ and $x$
axis, respectively; in words, (\ref{euler}) is stating that each rotation
can be realized by rotating first by an angle $\gamma $ around the $z$ axis,
then by an angle $\beta $ around the $x$ axis, then again by an angle $%
\alpha $ around the $z$ axis. We denote as usual by $\left\{
D^{l}(.)\right\} _{l=0,1,2,...}$ the Wigner family of irreducible matrix
representations for $SO(3);$ in terms of the Euler angles, the elements of
these matrices can be expressed as
\begin{equation*}
D_{m_{1}m_{2}}^{l}(g)=\exp (-im_{1}\alpha )d_{m_{1}m_{2}}^{l}(\beta )\exp
(-im_{2}\gamma )\text{ .}
\end{equation*}%
Note that $\overline{D_{m_{1}m_{2}}^{l}}(g)=(-1)^{m_{1}-m_{2}}\overline{%
D_{-m_{1},-m_{2}}^{l}}(g).$ Standard results from group representation
theory (\cite{faraut,vmk,VIK}) yield
\begin{equation*}
\sum_{m_{2}}D_{m_{1}m_{2}}^{l}(g)\overline{D_{m_{1}^{\prime
}m_{2}}^{l^{\prime }}}(g) = \delta _{l}^{l^{\prime }}\delta
_{m_{1}}^{m_{1}^{\prime }}\text{ ,}
\end{equation*}
and
\begin{equation*}
  \int_{SO(3)}D_{m_{1}m_{2}}^{l}(g)\overline{D_{m_{1}^{\prime
}m_{2}^{\prime }}^{l^{\prime }}}(g)dg = \frac{8\pi ^{2}}{2l+1}\delta
_{l}^{l^{\prime }}\delta _{m_{1}}^{m_{1}^{\prime }}\delta
_{m_{2}}^{m_{2}^{\prime }}\text{ },
\end{equation*}
$dg$ denoting the standard uniform (Haar) metric on $SO(3).$ The elements of
$\left\{ D^{l}(.)\right\} _{l=0,1,2,...}$ thus make up an orthogonal system
which is also complete, i.e., it is a consequence of the Peter-Weyl theorem (%
\cite{faraut}) that all square integrable functions on $SO(3)$ can be
expanded, in the mean square sense, as
\begin{equation*}
f(g)=\sum_{l}\sum_{m_{1}m_{2}}\frac{2l+1}{8\pi ^{2}}%
b_{m_{1}m_{2}}^{l}D_{m_{1}m_{2}}^{l}(g)\text{ ,}
\end{equation*}%
where the coefficients $\left\{ b_{m_{1}m_{2}}^{l}\right\} $ can be
recovered from the inverse Fourier transform%
\begin{equation*}
b_{m_{1}m_{2}}^{l}=\int_{SO(3)}f(g)\overline{D_{m_{1}m_{2}}^{l}(g)}dg\text{ .%
}
\end{equation*}%
By elementary geometry, we can view the unit sphere as the quotient space $%
\mathbb{S}^{2}=SO(3)/SO(2)$ and the functions on the sphere as those which
are constants with respect to the third Euler angle $\gamma ,$ i.e. $%
f(\alpha ,\beta ,\gamma )=f(\alpha ,\beta ,\gamma ^{\prime })$ for all $%
\gamma ,\gamma ^{\prime }.$ It follows that
$$
\displaylines{
\int_{SO(3)}f(g)\overline{D_{m_{1}m_{2}}^{l}(g)}\,dg
\cr
=(-1)^{m_{1}-m_{2}}\int_{0}^{2\pi }\int_{0}^{\pi }\int_{0}^{2\pi
}f(g)\exp (im_{1}\alpha )d_{-m_{1}-m_{2}}^{l}(\beta )\exp
(im_{2}\gamma )\sin \beta
d\alpha d\beta d\gamma \cr
=\begin{cases}
0&\mbox{for }m_{2}\neq 0 \cr
2\pi b_{m_{1}0}^{l}&\mbox{otherwise}\cr
\end{cases}
}
$$
In view of the well-known identity%
\begin{equation*}
\begin{array}{c}
\displaystyle Y_{lm}(\beta ,\alpha ) =\sqrt{\frac{2l+1}{4\pi }}d_{m0}^{l}(\beta
)e^{im\alpha } \\
\displaystyle =(-1)^{m}\sqrt{\frac{2l+1}{4\pi }}d_{-m0}^{l}(\beta )e^{im\alpha }=(-1)^{m}%
\sqrt{\frac{2l+1}{4\pi }}D_{-m,0}^{l}(\alpha ,\beta ,\gamma )
\end{array}
\end{equation*}
%
(where we have used $d_{mn}^{l}(\beta )=(-1)^{m-n}d_{-m,-n}^{l}(\beta ),$
see \cite{vmk}$,$ equation 4.4.1), we immediately obtain the expansion of
functions on the sphere into spherical harmonics, i.e.
\begin{equation*}
f(p)=\sum_{l}\sum_{m}\frac{2l+1}{4\pi }b_{m0}^{l}D_{m0}^{l}(p)=%
\sum_{lm}a_{lm}Y_{lm}(p)\text{ , }a_{lm}=\sqrt{\frac{2l+1}{4\pi }}b_{-m0}^{l}%
\text{ .}
\end{equation*}%
We can hence loosely say that standard scalar functions on the sphere ``live
in the space generated by the column $s=0$ of the Wigner's $D$ matrices of
irreducible representations'', see also \cite{mpjmva}. Now from the
Peter-Weyl Theorem we know that each of the columns $s=-l,...,l$ spans a
space of irreducible representations, and these spaces are mutually
orthogonal; it is then a natural, naive question to ask what is the physical
significance of these further spaces. It turns out that these are strictly
related to spin functions, indeed we can expand the fibre bundle of spin $s$
functions as%
\begin{equation}
f_{s}(\vartheta ,\varphi )=\left. \sum_{l}\sum_{m}\frac{2l+1}{4\pi }%
b_{ms}^{l}D_{ms}^{l}(\varphi ,\vartheta ,\gamma )\right| _{\gamma =0}.
\label{specwig}
\end{equation}%
Spin $s$ functions can then be related to the so-called spin weighted
representations of $SO(3),$ see for instance \cite{brocker}. Now by standard
group representation properties we have that%
\begin{equation*}
   \begin{array}{c}
\displaystyle f_{s}((R_{z}(\gamma )p) =\sum_{l}\sum_{m}\frac{2l+1}{4\pi }%
b_{ms}^{l}D^{l}(R_{z}(\gamma ))D_{ms}^{l}(\varphi ,\vartheta ,\gamma ) \\
\displaystyle =\sum_{l}\sum_{m}\frac{2l+1}{4\pi }b_{lms}\exp (is\gamma
)D_{ms}^{l}(\varphi ,\vartheta ,\gamma )\\
\displaystyle =\exp (is\gamma )f_{s}(p)\text{ ,}
   \end{array}
\end{equation*}
as expected.

The analogy with the scalar case can actually be pursued further than that.
It is well-known that the elements $D_{m0}^{l},$ $m=-l,...,l$ of the
Wigner's $D$ matrices are proportional to the spherical harmonics $Y_{lm},$
i.e. the eigenfunctions of the spherical Laplacian operator \newline
$\Delta _{\mathbb{S}^{2}}Y_{lm}=-l(l+1)Y_{lm}.$ It turns out that this
equivalence holds in much greater generality and for all integer $s$ and $%
l\geq s$ there exist a differential operator $\eth \overline{\eth }$ such
that $-\eth \overline{\eth }D_{ms}^{l}=e_{ls}D_{ms}^{l},$ where $\left\{
e_{ls}\right\} _{l=s,s+1,}=\left\{ (l-s)(l+s+1)\right\} _{s,s+1,...}$ is the
associated sequence of eigenvalues (note that for $s=0$ we are back to the
usual expressions for the scalar case, as expected)$.$ The operators $\eth ,%
\overline{\eth }$ are defined as follows, in terms of their action on any
spin $s$ function $f_{s}(.),$%

\begin{align}
\eth f_{s}(\vartheta ,\varphi ) =-\left( \sin \vartheta \right) ^{s}\left[
\frac{\partial }{\partial \vartheta }+\frac{i}{\sin \vartheta }\frac{%
\partial }{\partial \varphi }\right] \left( \sin \vartheta \right)
^{-s}f_{s}(\vartheta ,\varphi )\text{ },  \label{edth}\\
\overline{\eth }f_{s}(\vartheta ,\varphi ) =-\left( \sin \vartheta \right)
^{-s}\left[ \frac{\partial }{\partial \vartheta }-\frac{i}{\sin \vartheta }%
\frac{\partial }{\partial \varphi }\right] \left( \sin \vartheta \right)
^{s}f_{s}(\vartheta ,\varphi )\text{ }.  \label{bedth}
\end{align}

In (\ref{edth}) one should more rigorously write $(\eth f_s)_I$ on the left
side and $(f_s)_I$ on the right side. In fact, if on the right side of (\ref%
{edth}) we replace $(\vartheta,\varphi)$ by $(\vartheta_R,\varphi_R)$ and $%
f_s$ by $(f_s)_R$, the result is in fact $(\eth
f_s)_R(\vartheta_R,\varphi_R) $ (see \cite{gelmar}); similarly in (\ref%
{bedth}).

The spin $s$ spherical harmonics can then be identified as

\begin{equation}\label{sphwig}
\begin{array}c
Y_{lms}(\vartheta ,\varphi )=(-1)^{m}\sqrt{\frac{2l+1}{4\pi }}%
D_{-ms}^{l}(\varphi ,\vartheta ,-\psi )\exp (-is\psi ) \\
=(-1)^{m}\sqrt{\frac{2l+1}{4\pi }}\exp (im\varphi )d_{-ms}^{l}(\vartheta )%
\text{ ;}
\end{array}
\end{equation}%
again, the previous expression should be understood as $Y_{lms;I}(\vartheta
,\varphi ),$ i.e. spin spherical harmonics are clearly affected by
coordinate transformation, but we drop the reference to the choice of chart
for ease of notation whenever this can be done without the risk of
confusion. The spin spherical harmonics can be shown to satisfy%
\begin{equation}\label{spinh}
Y_{lm,s+1} =\left[ \left( l-s\right) \left( l+s+1\right) \right]
^{-1/2}\eth Y_{lm,s}\text{ },
\end{equation}

\begin{equation}\label{spinh2}
Y_{lm,s-1} =-\left[ \left( l+s\right) \left( l-s+1\right) \right] ^{-1/2}%
\overline{\eth }Y_{lm,s}\text{ },
\end{equation}%
which motivates the name of spin raising and spin lowering operators for $%
\eth ,\overline{\eth }$. Iterating, it can be shown also that (see \cite%
{gelmar})%
\begin{equation*}
Y_{lms} =\left\{ \frac{(l-s)!}{(l+s)!}\right\} ^{1/2}(\eth )^{s}Y_{lm}%
\text{ , for }s>0\text{ ,}
\end{equation*}
\begin{equation*}
Y_{lms} =\left\{ \frac{(l+s)!}{(l-s)!}\right\} ^{1/2}(-\overline{\eth }%
)^{-s}Y_{lm}\text{ , for }s<0\text{ .}
\end{equation*}%
Further properties of the spin spherical harmonics follow easily from their
proportionality to elements of Wigner's $D$ matrices; indeed we have
(orthonormality)%
\begin{equation*}
\int_{\mathbb{S}^{2}}Y_{lms}(p)\overline{Y_{l^{\prime }m^{\prime }s}}%
(p)dp=\int_{0}^{2\pi }\int_{0}^{\pi }Y_{lms}(\vartheta ,\varphi )\overline{%
Y_{l^{\prime }m^{\prime }}(\vartheta ,\varphi )}\sin \vartheta d\vartheta
d\varphi =\delta _{l}^{l^{\prime }}\delta _{m}^{m^{\prime }}\text{ .}
\end{equation*}%
Viewing spin-spherical harmonics as functions on the group $SO(3)$ (i.e.
identifying $p=(\vartheta ,\varphi )$ as the corresponding rotation by means
of Euler angles), using (\ref{sphwig}) and the group addition properties we
obtain easily, for $p,p^{\prime}\in \mathbb{S}^{2}$, that

\begin{equation*}
\sum_{m=-l}^{l}Y_{lms}\left( p\right) \overline{Y_{lms}\left( p^{\prime
}\right) } =\frac{2l+1}{4\pi }\sum_{m}D_{-ms}^{l}(\varphi ,\vartheta ,0)%
\overline{D_{-ms}^{l}}(\varphi ^{\prime },\vartheta ^{\prime },0)
\end{equation*}
\begin{equation*}
=\frac{2l+1}{4\pi }D_{-ss}^{l}(\psi (p,p^{\prime }))\text{ ,}
\end{equation*}
where $\psi (p,p^{\prime })$ denotes the composition of the two rotations
(explicit formulae can be found in \cite{vmk}). In the special case $%
p=p^{\prime }$ and $R=R^{\prime },$ we have immediately
\begin{equation}
\sum_{m=-l}^{l}Y_{lms}\left( p\right) \overline{Y_{lms}\left( p\right) }=%
\frac{2l+1}{4\pi }\text{ ,}  \label{addup}
\end{equation}%
see also \cite{gelmar} for an alternative proof.

By combining (\ref{specwig}) and (\ref{sphwig}) the spectral representation
of spin functions is derived:%
\begin{equation}
f_{s}(\vartheta ,\varphi )=\sum_{l}\sum_{m}a_{l;ms}Y_{l;ms}(\vartheta
,\varphi )\text{ .}  \label{specspin}
\end{equation}%
From (\ref{specspin}), a further, extremely important characterization of
spin functions was first introduced by \cite{newman}, see also and \cite%
{gelmar} for a more mathematically oriented treatment. In particular, it can
be shown that there exists a scalar complex-valued function $g(\vartheta
,\varphi )={\rm Re}\left\{ g\right\} (\vartheta ,\varphi )+i{\rm Im}%
\left\{ g\right\} (\vartheta ,\varphi )$, such that, such that%
\begin{equation*}
f_{s}(\vartheta ,\varphi )=f_{E}(\vartheta ,\varphi )+if_{B}(\vartheta
,\varphi )
\end{equation*}%
\begin{equation}
=\sum_{lm}a_{lm;E}Y_{lms}(\vartheta ,\varphi
)+i\sum_{lm}a_{lm;B}Y_{lms}(\vartheta ,\varphi )\text{ ,}  \label{charspin}
\end{equation}%
where
\begin{equation*}
f_{E}(\vartheta ,\varphi )=(\eth )^{s}{\rm Re}\left\{ g\right\}
(\vartheta ,\varphi )\text{ , }f_{B}(\vartheta ,\varphi )=(\eth
)^{s}{\rm Im}\left\{ g\right\} \text{ .}
\end{equation*}%
Note that $a_{l;ms}=a_{lm;E}+ia_{lm;B}$, where $a_{lm;E}=\overline{a}%
_{l-m;E},$ $a_{lm;B}=\overline{a}_{l-m;B}.$ It is also readily seen that%

\begin{equation*}
a_{l;ms}+\overline{a_{l;-ms}}
=a_{lm;E}+ia_{lm;B}+a_{lm;E}-ia_{lm;B}=2a_{lm;E}\text{ ,}
\end{equation*}
\begin{equation*}
a_{l;ms}-\overline{a_{l;-ms}}
=a_{lm;E}+ia_{lm;B}-a_{lm;E}+ia_{lm;B}=2ia_{lm;B}\text{ .}
\end{equation*}

In the cosmological literature, $\left\{ a_{lm;E}\right\} $ and $\left\{
a_{lm;B}\right\} $ are labelled the $E$ and $B$ modes (or the electric and
magnetic components) of CMB\ polarization.

\section{Spin Needlets and Spin Random Fields}

We are now in the position to recall the construction of spin needlets, as
provided by \cite{gelmar}. We start by reviewing a few basic facts about
standard (scalar) needlets. Needlets have been defined by \cite{npw1,npw2}
as
\begin{equation}
\psi _{jk}\left( p\right) =\sqrt{\lambda _{jk}}\sum_{l}b\left( \frac{l}{B^{j}%
}\right) \sum_{m=-l}^{l}Y_{lm}\left( p\right) \overline{Y_{lm}}\left( \xi
_{jk}\right) \text{ , }p\in \mathbb{S}^{2},  \label{spinstan}
\end{equation}%
where $\left\{ \xi _{jk},\lambda _{jk}\right\} $ are a set of cubature
points and weights ensuring that%
\begin{equation*}
\sum_{jk}\lambda _{jk}Y_{lm}\left( \xi _{jk}\right) \overline{Y_{l^{\prime
}m^{\prime }}}\left( \xi _{jk}\right) =\int_{\mathbb{S}^{2}}Y_{lm}\left(
p\right) \overline{Y_{l^{\prime }m^{\prime }}}\left( p\right) dp=\delta
_{l}^{l^{\prime }}\delta _{m}^{m^{\prime }}\text{ ,}
\end{equation*}%
$b(.)$ is a compactly supported, $C^{\infty }$ function, and $B>1$ is a
user-chosen ``bandwidth''\ parameter. The general cases of non-compactly
supported functions $b(.)$ and more abstract manifolds than the sphere were
studied by \cite{gm1,gm2,gm3}. The stochastic properties of needlet
coefficients and their use for the analysis of spherical random fields were
first investigated by \cite{bkmpAoS,bkmpBer}, see also \cite{ejslan, mayeli,
spalan, fg08} for further developments. Several applications have already
been provided to CMB data analysis, see for instance \cite%
{pbm06,mpbb08,dela08,fay08,
pietrobon1,pietrobon2,rudjord1,pietrobon3,rudjord2}.

For a fixed $B>1,$ we shall denote by $\{\mathcal{X}_{j}\}_{j=0}^{\infty }$
the nested sequence of cubature points corresponding to the space $\mathcal{K%
}_{[2B^{j+1}]},$ where $[.]$ represents as usual integer part and $\mathcal{K%
}_{L}=\oplus _{l=0}^{L}H_{l}$ is the space spanned by spherical harmonics up
to order $L$. It is known that $\{\mathcal{X}_{j}\}_{j=0}^{\infty }$ can be
taken such that the cubature points for each $j$ are almost uniformly $%
\epsilon _{j}-$distributed with $\epsilon _{j}:=\kappa B^{-j},$ the
coefficients $\{\lambda _{jk}\}$ are such that $cB^{-2j}\leq \lambda
_{jk}\leq c^{\prime }B^{-2j}$, where $c,c^{\prime }$ are finite real
numbers, and $card\left\{ \mathcal{X}_{j}\right\} \approx B^{2j}$. Exact
cubature points can be defined for the spin as for the scalar case, see \cite%
{bkmp09} for details; for practical CMB data analysis, these cubature points
can be identified with the centre pixels provided \cite{healpix}, with only
a minor approximation.

Spin needlets are then defined as (see \cite{gelmar})
\begin{equation}
\psi _{jk;s}\left( p\right) =\sqrt{\lambda _{jk}}\sum_{l}b\left( \frac{\sqrt{%
e_{ls}}}{B^{j}}\right) \sum_{m=-l}^{l}Y_{l;ms}\left( p\right) \overline{%
Y_{l;ms}}\left( \xi _{jk}\right) \text{ .}  \label{spinlet}
\end{equation}%
As before, $\left\{ \lambda _{jk},\xi _{jk}\right\} $ are cubature points
and weights, $b\left( \cdot \right) \in C^{\infty }$ is nonnegative$,$ and
has a compact support in $\left[ 1/B,B\right] .$ The expression (\ref%
{spinlet}) bears an obvious resemblance with (\ref{spinstan}), but it is
also important to point out some crucial differences. Firstly, we note that
the square root of the eigenvalues $\sqrt{e_{ls}}$ has replaced the previous
$l.$ This formulation is instrumental for the derivation of the main
properties of spin needlets by means of differential arguments in (\cite%
{gelmar}); we stress, however, that this is actually a minor difference, as
all our results are asymptotic and of course
\begin{equation*}
\lim_{l\rightarrow \infty }\frac{\sqrt{e_{ls}}}{l}=\lim_{l\rightarrow \infty
}\frac{\sqrt{(l-s)(l+s+1)}}{l}=1\text{ for all fixed }s\text{ .}
\end{equation*}%
A much more important feature is as follows: (\ref{spinlet}) cannot be
viewed as a well-defined scalar or spin function, because $Y_{l;ms}\left(
p\right) ,\overline{Y_{l;ms}}\left( \xi _{jk}\right) $ are spin($s$ and $-s)$
functions defined on different point of $\mathbb{S}^{2},$ and as such they
cannot be multiplied in any meaningful way (their product depends on the
local choice of coordinates). Hence, (\ref{spinlet}) should be written more
rigorously as
\begin{equation*}
\psi _{jk;s}\left( p\right) =\sqrt{\lambda _{jk}}\sum_{l}b\left( \frac{%
\sqrt{e_{ls}}}{B^{j}}\right) \sum_{m=-l}^{l}\left\{ Y_{l;ms}\left( p\right)
\otimes \overline{Y_{l;ms}}\left( \xi _{jk}\right) \right\} \text{ ,}
\end{equation*}
\begin{equation*}
\overline{\psi _{jk;s}}\left( p\right) =\sqrt{\lambda _{jk}}%
\sum_{l}b\left( \frac{\sqrt{e_{ls}}}{B^{j}}\right) \sum_{m=-l}^{l}\left\{
\overline{Y_{l;ms}}\left( p\right) \otimes Y_{l;ms}\left( \xi _{jk}\right)
\right\} \text{ ,}
\end{equation*}
where we denoted by $\otimes $ the tensor product of spin functions; spin
needlets can the be viewed as spin $\left\{ -s,s\right\} $ operators
(written $T_{-s,s})$, which act on a space of spin $s$ functions square
integrable functions to produce a sequence of spin $s$ square-summable
coefficients, i.e. $T_{-s,s}:L_{s}^{2}\rightarrow \ell _{s}^{2}.$ This
action is actually an isometry, as a consequence of the tight frame
property, see \cite{bkmp09} and \cite{gm4}.

For any spin $s$ function $f_{s},$ the spin needlet transform is defined by%
\begin{equation*}
\int_{\mathbb{S}^{2}}f_{s}(p)\overline{\psi _{jk;s}}(p)dp=\beta _{jk;s}\text{
,}
\end{equation*}%
and the same inversion property holds as for standard needlets, i.e.
\begin{equation*}
f_{s}(p)=\sum_{jk}\beta _{jk;s}\psi _{jk;s}(p)\text{ ,}
\end{equation*}%
the equality holding in the $L^{2}$ sense. The coefficients of spin needlets
can be written explicitly as%
\begin{equation}
\beta _{jk;s}=\int_{\mathbb{S}^{2}}f_{s}(p)\overline{\psi _{jk;2}}(p)dp=%
\sqrt{\lambda _{jk}}\sum_{l}b\left( \frac{\sqrt{e_{ls}}}{B^{j}}\right)
\sum_{m=-l}^{l}a_{l;ms}Y_{l;ms}\left( \xi _{jk}\right) \text{ }.
\label{coef}
\end{equation}

\begin{remark}
To illustrate the meaning of these projection operations, and using a
notation closer to the physical literature, we could view spin $s$
quantities as ``bra'' entities, i.e. write $\left\langle T(p)\right.
,\left\langle \beta _{jk;s}\right. ,$ and spin $-s$ as ``ket'' quantities,
i.e. write for instance $\left. \overline{Y_{l;ms}}\left( p\right)
\right\rangle .$ Then we would obtain
\begin{equation*}
\int_{\mathbb{S}^{2}}f_{s}(p)\overline{\psi _{jk;2}}(p)dp =\sqrt{\lambda
_{jk}}\sum_{l}b\left( \frac{\sqrt{e_{ls}}}{B^{j}}\right)
\sum_{m=-l}^{l}\int_{\mathbb{S}^{2}}\left\langle f_{s}(p)\right. ,\left.
\overline{Y_{l;ms}}\left( p\right) \right\rangle \left\langle Y_{l;ms}\left(
\xi _{jk}\right) \right. dp
\end{equation*}
\begin{equation*}
=\sqrt{\lambda _{jk}}\sum_{l}b\left( \frac{\sqrt{e_{ls}}}{B^{j}}\right)
\sum_{m=-l}^{l}a_{l;ms}\left\langle Y_{l;ms}\left( \xi _{jk}\right) \right.
\text{ ,}
\end{equation*}%
which is a well-defined spin quantities, as the inner product $\left\langle
f_{s}(p),\overline{Y_{l;ms}}\left( p\right) \right\rangle $ yields a
well-defined, complex-valued scalar. However we shall not use this ``Dirac''
notation later in this paper, as we hope the meaning of our manipulations
will remain clear by themselves.
\end{remark}

The absolute value of spin needlets is indeed a well-defined scalar
function, and this allows to discuss localization properties. In this
framework, the main result is established in \cite{gelmar}, where it is
shown that for any $M\in \mathbb{N}$ there exists a constant $c_{M}>0$ s.t.,
for every $\xi \in \mathbb{S}^{2}$:
\begin{equation}
\left| \psi _{jk;s}(\xi )\right| \leq \frac{c_{M}B^{j}}{(1+B^{j}\arccos
(\langle \xi _{jk},\xi \rangle ))^{M}}\text{ uniformly in }(j,k)\text{ },
\label{3}
\end{equation}%
i.e. the tails decay quasi-exponentially.

We are now able to focus on the core of this paper, which is related to the
analysis of spin random fields. As mentioned in the previous discussion, we
have in mind circumstances where stochastic analysis must be developed on
polarization random fields $\left\{ Q\pm iU\right\} ,$ which are spin $\pm 2$
random functions.

Hence we shall now assume we deal with random isotropic spin functions $f_s$%
, by which we mean that there exist a probability space $(\Omega ,\Im ,P)$,
such that for all choices of charts $U_{R}$, the ordinary random function $%
(f_{s})_R$, defined on $\Omega \times \mathbb{S}^{2}$, is jointly $\Im
\times \mathcal{B}(U_R)$ measurable, where $\mathcal{B}(U_R)$ denotes the
Borel sigma-algebra on $U_R.$ In particular, for the spin $2$ random
function $(Q+iU)(p)$ as for the scalar case, the following representation
holds, in the mean square sense (\cite{gelmar})%
\begin{equation*}
\left\{ Q+iU\right\} =\sum_{lm}a_{lm;2}Y_{l;m2}\text{ , }
\end{equation*}%
i.e.
\begin{equation*}
\lim_{L\rightarrow \infty }E\int_{\mathbb{S}^{2}}\left| \left\{ Q+iU\right\}
(p)-\sum_{l=1}^{L}\sum_{m=-l}^{l}a_{lm;2}Y_{l;m2}(p)\right| ^{2}dp=0\text{ .}
\end{equation*}%
Note that the quantity on the left-hand side is a well-defined scalar, for
all $L.$ The sequence $\left\{ a_{lm2}=a_{lm;E}+ia_{lm;B}\right\} $ is
complex-valued and is such that, for all $l_{1},l_{2},m_{1},m_{2}$ ,%
\begin{equation*}
Ea_{l_{1}m_{1};E}a_{l_{2}m_{2};E}=Ea_{l_{1}m_{1};B}a_{l_{2}m_{2};E}=Ea_{l_{1}m_{1};E}a_{l_{2}m_{2};B}=Ea_{l_{1}m_{1};E}%
\overline{a_{l_{2}m_{2};B}}=0\text{ , }
\end{equation*}%
and
\begin{equation*}
Ea_{lm;E}\overline{a_{l^{\prime }m^{\prime };E}}=C_{lE}\delta
_{l}^{l^{\prime }}\delta _{m}^{m^{\prime }}\text{ , }Ea_{lm;B}\overline{%
a_{l^{\prime }m^{\prime };B}}=C_{lB}\delta _{l}^{l^{\prime }}\delta
_{m}^{m^{\prime }}\text{, }
\end{equation*}%
where
\begin{equation*}
\sum_{l}\frac{2l+1}{4\pi }C_{lE}\text{ },\text{ }\sum_{l}\frac{2l+1}{4\pi }%
C_{lB}<\infty \text{ }.
\end{equation*}

The spin (or total) angular power spectrum is defined as%
\begin{equation*}
E%
{\vert}%
a_{lm;2}%
{\vert}%
^{2}=:C_{l}=\left\{ C_{lE}+C_{lB}\right\} \text{ .}
\end{equation*}

In this paper, we shall be dealing with quadratic transforms of random
needlet coefficients; as in the earlier works in this area, will use the
diagram formulae (see for instance \cite{pt,s}) extensively, and we provide
here a brief overview to fix notation. Denote by $H_{q}$ the $q-$th order
Hermite polynomials, defined as
\begin{equation*}
H_{q}(u)=(-1)^{q}e^{u^{2}/2}\frac{d^{q}}{du}e^{-u^{2}/2}.
\end{equation*}%
Diagrams are basically mnemonic devices for computing the moments and
cumulants of polynomial forms in Gaussian random variables. Our notation is
the same as for instance in \cite{m,mptrf}, where again these techniques are
applied in a CMB related framework. Let $p$ and $l_{ij}=1,...,p$ be given
integers. A diagram $\gamma $ of order $(l_{1},...,l_{p})$ is a set of
points $\{(j,l):1\leq j\leq p,1\leq l\leq l_{j}\}$ called vertices, viewed
as a table $W=\overrightarrow{l_{1}}\otimes \cdots \otimes \overrightarrow{%
l_{p}}$ and a partition of these points into pairs
\begin{equation*}
\{((j,l),(k,s)):1\leq j\leq k\leq p;1\leq l\leq l_{j},1\leq s\leq l_{k}\},
\end{equation*}%
called edges. We denote by $I(W)$ the set of diagrams of order $%
(l_{1},...,l_{p})$. If the order is $l_{1}=\cdots =l_{p}=q$, for simplicity,
we also write $I(p,q)$ instead of $I(W)$. We say that:

$a)$ A diagram has a flat edge if there is at least one pair $%
\{(i,j)(i^{\prime },j^{\prime })\}$ such that $i=i^{\prime }$; we write $%
I_{F}$ for the set of diagrams that have at least one flat edge, and $I_{%
\overline{F}}$ otherwise.

$b)$ A diagram is \emph{connected} if it is not possible to partition the
rows $\overrightarrow{l_{1}}\cdots \overrightarrow{l_{p}}$ of the table $W$
into two parts, i.e. one cannot find a partition $K_{1}\cup
K_{2}=\{1,...,p\} $ that, for each member $V_{k}$ of the set of edges $%
(V_{1},...,V_{r})$ in a diagram $\gamma $, either $V_{k}\in \cup _{j\in
K_{1}}\overrightarrow{l_{j}}$, or $V_{k}\in \cup _{j\in K_{2}}%
\overrightarrow{l_{j}}$ holds; we write $I_{C}$ for connected diagrams, and $%
I_{\overline{C}}$ otherwise.

$c)$ A diagram is paired if, considering any two sets of edges $%
\{(i_{1},j_{1})(i_{2},j_{2})\}$ $\{(i_{3},j_{3})(i_{4},j_{4})\}$, then $%
i_{1}=i_{3}$ implies $i_{2}=i_{4}$; in words, the rows are completely
coupled two by two.

The following, well-known Diagram Formula will play a key role in some of
the computations to follow (see \cite{pt} and \cite{s}).

\begin{proposition}
{\label{prop2}} \emph{(Diagram Formula)} Let $(X_{1},...,X_{p})$ be a
centered Gaussian vector, and let $\gamma _{ij}=E[X_{i}X_{j}],i,j=1,...,p$
be their covariances, $H_{l_{1}},...,H_{l_{p}}$ be Hermite polynomials of
degree $l_{1},...,l_{p}$ respectively. Let $L$ be a table consisting of $p$
rows $l_{1},....l_{p}$, where $l_{j}$ is the order of Hermite polynomial in
the variable $X_{j}$. Then

\begin{equation*}
E[\prod\limits_{j=1}^{p}H_{l_{j}}(X_{j})] =\sum_{G\in
I(l_{1},...,l_{p})}\prod\limits_{1\leq i\leq j\leq p}\gamma
_{ij}^{\eta
_{ij}(G)}  \label{diag1}
\end{equation*}
\begin{equation*}
Cum(H_{l_{1}}(X_{1}),...,H_{l_{p}}(X_{p})) =\sum_{G\in
I_{c}(l_{1},...,l_{p})}\prod\limits_{1\leq i\leq j\leq p}\gamma
_{ij}^{\eta _{ij}(G)}  \notag
\end{equation*}

where, for each diagram $G$, $\eta _{ij}(G)$ is the number of edges between
rows $l_{i},l_{j}$ and $Cum(H_{l_{1}}(X_{1}),...,H_{l_{p}}(X_{p}))$ denotes
the $p$-$th$ order cumulant.
\end{proposition}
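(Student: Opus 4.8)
The plan is to reduce both identities to the exponential generating function of the Hermite polynomials combined with the Gaussian moment generating function, and then to read off the result by extracting coefficients. First I would recall the classical identity $\sum_{n\ge 0}\frac{t^n}{n!}H_n(x)=\exp(tx-t^2/2)$, which follows directly from the definition $H_q(u)=(-1)^q e^{u^2/2}\frac{d^q}{du^q}e^{-u^2/2}$. Forming the product over $j=1,\dots,p$ with formal parameters $t_1,\dots,t_p$ gives
\[
\prod_{j=1}^p \sum_{n_j\ge0}\frac{t_j^{n_j}}{n_j!}H_{n_j}(X_j)=\exp\Big(\sum_j t_j X_j-\tfrac12\sum_j t_j^2\Big),
\]
and taking expectations, using that $\sum_j t_j X_j$ is centered Gaussian with variance $\sum_{i,j}t_it_j\gamma_{ij}$, yields
\[
E\Big[\prod_{j=1}^p \sum_{n_j\ge0}\tfrac{t_j^{n_j}}{n_j!}H_{n_j}(X_j)\Big]=\exp\Big(\tfrac12\sum_{i,j}t_it_j\gamma_{ij}-\tfrac12\sum_j t_j^2\Big)=\exp\Big(\sum_{1\le i<j\le p}\gamma_{ij}t_it_j\Big),
\]
where the last equality uses the standardization $\gamma_{jj}=1$. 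It is precisely this cancellation of the diagonal terms by the $-t_j^2/2$ correction that removes the flat edges, so that the surviving diagrams are exactly those of $I_{\overline F}$.

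Next I would extract the coefficient of $\prod_j t_j^{l_j}$. Writing $\exp(\sum_{i<j}\gamma_{ij}t_it_j)=\prod_{i<j}\sum_{\eta_{ij}\ge0}\frac{(\gamma_{ij}t_it_j)^{\eta_{ij}}}{\eta_{ij}!}$, the coefficient of $\prod_j t_j^{l_j}$ is $\sum_{\{\eta_{ij}\}}\prod_{i<j}\frac{\gamma_{ij}^{\eta_{ij}}}{\eta_{ij}!}$, the sum running over nonnegative integer arrays with $\sum_{i\ne j}\eta_{ij}=l_j$ for every $j$. Multiplying by $\prod_j l_j!$ to match the $t_j^{l_j}/l_j!$ normalization gives $E[\prod_j H_{l_j}(X_j)]=\sum_{\{\eta_{ij}\}}\frac{\prod_j l_j!}{\prod_{i<j}\eta_{ij}!}\prod_{i<j}\gamma_{ij}^{\eta_{ij}}$. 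The crux is then the purely combinatorial identity that $\frac{\prod_j l_j!}{\prod_{i<j}\eta_{ij}!}$ equals the number of diagrams $G\in I_{\overline F}(l_1,\dots,l_p)$ having exactly $\eta_{ij}$ edges between rows $i$ and $j$; I would establish this by choosing, in each row $j$, an ordered partition of its $l_j$ vertices into blocks of sizes $\{\eta_{ij}\}_{i\ne j}$ (a multinomial factor) and then matching the $\eta_{ij}$ vertices sent from row $i$ to row $j$ in $\eta_{ij}!$ ways, checking that the resulting double counting collapses to the stated ratio. Summing over configurations produces the moment formula.

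For the cumulant identity I would not repeat the generating-function computation but instead invoke the general relation between joint moments and joint cumulants. Applying the Leonov--Shiryaev formula, $\mathrm{Cum}(H_{l_1}(X_1),\dots,H_{l_p}(X_p))=\sum_{\pi}(-1)^{|\pi|-1}(|\pi|-1)!\prod_{B\in\pi}E[\prod_{j\in B}H_{l_j}(X_j)]$ over partitions $\pi$ of $\{1,\dots,p\}$, I would substitute the moment formula just proved into each block. The decisive observation is that the weight $\prod_{i<j}\gamma_{ij}^{\eta_{ij}(G)}$ is multiplicative over the connected components of $G$; hence the product of block-moments reorganizes as a sum over all diagrams compatible with $\pi$, and the standard cancellation on the partition lattice eliminates every diagram whose components do not join all $p$ rows, leaving exactly $\sum_{G\in I_c(l_1,\dots,l_p)}\prod_{i<j}\gamma_{ij}^{\eta_{ij}(G)}$.

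I expect the main obstacle to be the bookkeeping in the combinatorial counting step --- keeping the multinomial and matching factors consistent and confirming that they collapse to $\prod_j l_j!/\prod_{i<j}\eta_{ij}!$ --- together with a clean invocation of the connectedness-versus-cumulant correspondence via Möbius inversion. Both ingredients are standard (see \cite{pt} and \cite{s}), but they are where all of the care is required.
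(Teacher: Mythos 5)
The paper does not actually prove this proposition: it is quoted as a well-known result with pointers to \cite{pt} and \cite{s}, so there is no in-paper argument to compare against. Your proof is correct and is essentially the standard derivation found in those references: the Hermite exponential generating function combined with the Gaussian moment generating function, coefficient extraction with the multinomial-times-matching count $\prod_j l_j!/\prod_{i<j}\eta_{ij}!$ (which does collapse exactly as you claim, since the two multinomial denominators contribute $\prod_{i<j}(\eta_{ij}!)^2$ and one factor is cancelled by the $\eta_{ij}!$ matchings), and M\"obius inversion on the partition lattice, using multiplicativity of the diagram weight over connected components, to pass from moments to cumulants. Two remarks. First, your derivation uses the standardization $\gamma_{jj}=1$; the proposition does not state this hypothesis, but it is genuinely needed for the Hermite diagram formula and holds in all of the paper's applications, where the variables are normalized needlet coefficients. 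Second, what you actually prove is the sum over $I_{\overline{F}}$ (diagrams with no flat edges), whereas the proposition as printed sums over all of $I(l_{1},...,l_{p})$; your version is the correct one --- already for $p=1$, $l_{1}=2$ the printed formula would give $\gamma_{11}=1$ while $E[H_{2}(X_{1})]=0$ --- so this is an imprecision in the statement rather than a gap in your argument. The only step you leave implicit is the justification for interchanging expectation with the generating-function series, which is routine here.
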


\section{Spin Needlets Spectral Estimator}

In this section, we shall establish an asymptotic result for the spectral
estimator of spin needlets in the high resolution sense, i.e. we will
investigate the asymptotic behaviour of our statistics as the frequency band
goes higher and higher. We note first, however, one very important issue. As
we mentioned earlier, spin needlet coefficients are not in general scalar
quantities. It is possible to choose a single chart to cover all points
other than the North and South Pole; these two points can be clearly
neglected without any effect on asymptotic results. The resulting spin
coefficients will in general depend on the chart, and should hence be
written as $\left\{ \beta _{R;jks}\right\} ;$ however the choice of the
chart will only produce an arbitrary phase factor $\exp (is\gamma _{k}).$
The point is that, because in this paper we are only concerned with
quadratic statistics, the phase factor is automatically lost and our
statistics for the spin spectral estimator will be invariant with respect to
the choice of coordinates. In view of this, from now on we can neglect the
issues relative to the choice of charts; we will deal with needlet
coefficients as scalar-valued complex quantities, i.e. we will take the
chart as fixed, and for notational simplicity we write $\left\{ \beta
_{jks}\right\} $ rather than $\left\{ \beta _{R;jks}\right\} .$

We begin by introducing some regularity conditions on the polarization
angular power spectrum $\Gamma _{l}$, which are basically the same as in %
\cite{gelmar}, see also \cite{bkmpAoS}, \cite{bkmpBer} and \cite%
{ejslan,fg08,spalan,mayeli} for closely related assumptions.

\begin{condition}
\label{specon}The random field $\left\{ Q+iU\right\} \left( p\right) $ is
Gaussian and isotropic with angular power spectrum such that%
\begin{equation*}
C_{l}=l^{-\alpha }g(l)>0\text{ , where }c_{0}^{-1}\leq g(l)\leq c_{0}\text{
, }\alpha >2\text{ , for all }l\in \mathbb{N}\text{ ,}
\end{equation*}%
and for every $r\in \mathbb{N}$ there exist $c_{r}>0$ such that%
\begin{equation*}
|\frac{d^{r}}{du^{r}}g(u)|\leq c_{r}u^{-r}\text{ , }u\in (\left| s\right|
,\infty )\text{ .}
\end{equation*}
\end{condition}

\begin{remark}
The condition is fulfilled for instance by angular power spectra of the form
\begin{equation*}
C_{l}=\frac{F_{1}(l)}{l^{\beta }F_{2}(l)}\text{ ,}
\end{equation*}%
where $F_{1}(l),F_{2}(l)>0$ are polynomials of degree $q_{1},q_{2}>0,$ $%
\beta +q_{2}-q_{1}=\alpha $.
\end{remark}

By (\ref{coef}), it is readily seen that%
\begin{equation*}
E\beta _{jk;s}\beta _{j^{\prime }k^{\prime };s}=
\end{equation*}%
\begin{equation*}
\sqrt{\lambda _{jk}}\sqrt{\lambda _{j^{\prime }k^{\prime }}}%
\sum_{l,l^{\prime }}b\left( \frac{\sqrt{e_{ls}}}{B^{j}}\right) b\left( \frac{%
\sqrt{e_{l^{\prime }s}}}{B^{j^{\prime }}}\right) \sum_{m,m^{\prime
}}Ea_{l;ms}a_{l^{\prime };m^{\prime }s}Y_{l;ms}\left( \xi _{jk}\right)
Y_{l^{\prime };m^{\prime }s}\left( \xi _{j^{\prime }k^{\prime }}\right) =0
\end{equation*}%
because%
\begin{equation*}
Ea_{l;ms}a_{l^{\prime };m^{\prime
}s}=Ea_{l_{1}m_{1};E}a_{l_{2}m_{2};E}+2Ea_{l_{1}m_{1};B}a_{l_{2}m_{2};E}+Ea_{l_{1}m_{1};E}a_{l_{2}m_{2};B}=0%
\text{ .}
\end{equation*}%
On the other hand, the covariance $Cov\left( \beta _{jk;s},\overline{\beta
_{jk^{\prime };s}}\right) =E\beta _{jk;s}\overline{\beta _{jk^{\prime };s}}$
is in general non-zero. In view of (\ref{coef}, it is immediate to see that%
\begin{equation}
\left| Cov\left( \beta _{jk;s},\overline{\beta _{jk^{\prime };s}}\right)
\right| =\left| \sqrt{\lambda _{jk}}\sqrt{\lambda _{jk^{\prime }}}%
\sum_{l}b^{2}\left( \frac{\sqrt{e_{ls}}}{B^{j}}\right) C_{l}\frac{\left(
2l+1\right) }{4\pi }K^{ls}\left( \xi _{jk},\xi _{jk^{\prime }}\right)
\right| \text{ ;}  \label{cov}
\end{equation}%
where
\begin{equation}
K^{ls}\left( p,p^{\prime }\right) =\sum_{m=-l}^{l}Y_{lms}\left( p\right)
\overline{Y_{lms}\left( p^{\prime }\right) }\text{ .}
\end{equation}%
For $k=k^{\prime }$ we obtain as a special case from (\ref{addup}) that
\begin{equation}
E\left| \beta _{jk;s}\right| ^{2}=\lambda _{jk}\sum_{l}b^{2}\left( \frac{%
\sqrt{e_{ls}}}{B^{j}}\right) C_{l}\frac{\left( 2l+1\right) }{4\pi }\text{ }.
\label{var}
\end{equation}%
From (\ref{cov}) and (\ref{var}) we obtain%
\begin{equation}
\left| Corr\left( \beta _{jk;s},\overline{\beta _{jk^{\prime };s}}\right)
\right| =\frac{\left| \sum_{l}b^{2}\left( \frac{\sqrt{e_{ls}}}{B^{j}}\right)
C_{l}\frac{\left( 2l+1\right) }{4\pi }K^{ls}\left( \xi _{jk},\xi
_{jk^{\prime }}\right) \right| }{\sum_{l}b\left( \frac{\sqrt{e_{ls}}}{B^{j}}%
\right) C_{l}\frac{\left( 2l+1\right) }{4\pi }}\text{ }.  \label{9}
\end{equation}%
The key result for the development of the high-frequency asymptotic theory
in the next sections is the following uncorrelation result, which was
provided by \cite{gelmar}; under Condition \ref{specon},%
\begin{equation}
\left| Corr\left( \beta _{jk;s},\overline{\beta _{jk^{\prime };s}}\right)
\right| \leq \frac{C_{M}}{\left\{ 1+B^{j}d(\xi _{jk},\xi _{jk^{\prime
}})\right\} ^{M}}\text{ , for all }M\in \mathbb{N}\text{ , some }C_{M}>0%
\text{ .}  \label{1}
\end{equation}%
The analogous result for the scalar case is due to \cite{bkmpAoS}, see also %
\cite{spalan,mayeli} for some generalizations. We recall also the following
inequality (\cite{npw2}, \emph{Lemma 4.8}), valid for some $c_{M}$ depending
only on $M$, which will be used in the following discussion:%
\begin{equation}
\sum_{k^{\prime }}\frac{1}{\left\{ 1+B^{j}d(\xi _{jk},\xi _{jk^{\prime
}})\right\} ^{M}}\frac{1}{\left\{ 1+B^{j}d(\xi _{jk^{\prime }},\xi
_{jk^{\prime \prime }})\right\} ^{M}}\leq \frac{c_{M}}{\left\{ 1+B^{j}d(\xi
_{jk},\xi _{jk^{\prime \prime }})\right\} ^{M}}\text{ }.  \label{2}
\end{equation}

In view of (\ref{var}), let us now denote%
\begin{equation*}
\Gamma _{j;s} :=\sum_{k}\left| \beta _{jk;s}\right| ^{2}=\sum_{k}\lambda
_{jk}\sum_{l}b^{2}\left( \frac{\sqrt{e_{ls}}}{B^{j}}\right) C_{l}\frac{%
\left( 2l+1\right) }{4\pi }
\end{equation*}
\begin{equation*}
=\sum_{l}b^{2}\left( \frac{\sqrt{e_{ls}}}{B^{j}}\right) C_{l}\left(
2l+1\right) \text{ .}
\end{equation*}%
Under Condition \ref{specon}, it is immediate to see that%
\begin{equation}
C_{0}B^{\left( 2-\alpha \right) j}\leq \Gamma _{j;s}\leq C_{1}B^{\left(
2-\alpha \right) j}.  \label{gamb}
\end{equation}%
A question of great practical relevance is the asymptotic behaviour of $%
\sum_{k}\left| \beta _{jk;s}\right| ^{2}$ as an estimator for $\Gamma
_{j;s}; $ for the scalar case, this issue was dealt with by \cite{bkmpAoS},
where a Functional Central Limit Theorem result is established and proposed
as a test for goodness of fit on the angular power spectrum. In \cite{pbm06}%
, the needlets estimator was applied to the cross-spectrum of CMB and Large
Scale Structure data, while \cite{fg08,fay08} have considered the presence
of missing observations and observational noise, establishing a consistency
result and providing further applications to CMB data. In the spin case,
angular power spectrum estimation was considered by \cite{gelmar}, under the
unrealistic assumptions that the spin random field $P=Q+iU$ is observed on
the whole sphere and without noise. Here we shall be concerned with the much
more realistic case where some parts of the domain $\mathbb{S}^{2}$ are
``masked''\ by the presence of foreground contamination; more precisely, we
assume data are collected only on a subset $\mathbb{S}^{2}\setminus G,$ $G$
denoting the masked region. In this section, we do not consider the presence
of observational noise, which shall be dealt with in the following section.
In the sequel, for some (arbitrary small) constant $\varepsilon >0,$ we
define $G^{\varepsilon }=\left\{ x\in \mathbb{S}^{2}:d\left( x,G\right) \leq
\varepsilon \right\} .$ Consider%
\begin{equation}
\widehat{\Gamma }_{j;sG}^{\ast }:=\left\{ \sum_{k:\xi _{jk}\in \mathbb{S}%
^{2}\backslash G^{\varepsilon }}\lambda _{k}\right\} ^{-1}\sum_{k:\xi
_{jk}\in \mathbb{S}^{2}\backslash G^{\varepsilon }}\left| \beta
_{jk;s}^{\ast }\right| ^{2}  \label{specest}
\end{equation}%
where%
\begin{equation*}
\beta _{jk;s}^{\ast }=\int_{\mathbb{S}^{2}\backslash G}P(x)\overline{\psi
_{jk;s}}(x)dx\text{ .}
\end{equation*}%
Our aim will be to prove the following

\begin{theorem}
Under condition (\ref{specon}), we have%
\begin{equation*}
\frac{\widehat{\Gamma }_{j;sG}^{\ast }-\Gamma _{j;s}}{\sqrt{Var\left\{
\widehat{\Gamma }_{j;sG}^{\ast }\right\} }}\rightarrow _{d}N(0,1)\text{ , as
}j\rightarrow \infty \text{ .}
\end{equation*}
\end{theorem}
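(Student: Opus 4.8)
The plan is to represent the centered numerator of $\widehat{\Gamma}^*_{j;sG}$ as an element of the second Wiener chaos generated by the Gaussian field $P=Q+iU$, and to prove asymptotic normality by the method of cumulants: after dividing by $\sqrt{Var}$, I would show that every cumulant of order $p\ge 3$ tends to zero while the variance is normalized to $1$, which forces convergence to $N(0,1)$. Before the cumulant computation I would dispose of the two features that distinguish this estimator from the idealized full-sphere one, namely the masking and the fact that we center by the deterministic $\Gamma_{j;s}$ rather than by the exact mean.

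First I would show that masking is asymptotically invisible on the retained cubature points. For $\xi_{jk}\in\mathbb{S}^2\setminus G^{\varepsilon}$ one has $\beta_{jk;s}-\beta^*_{jk;s}=\int_G P(x)\overline{\psi_{jk;s}}(x)\,dx$, and since every $x\in G$ satisfies $d(\xi_{jk},x)>\varepsilon$, the localization bound (\ref{3}) gives $|\psi_{jk;s}(x)|\le c_M B^{j}(1+B^{j}\varepsilon)^{-M}$ on the whole masked region. Because $\varepsilon>0$ is fixed, $B^{j}\varepsilon\to\infty$, so this is smaller than any power of $B^{-j}$; consequently $E|\beta_{jk;s}-\beta^*_{jk;s}|^2$ decays faster than any polynomial in $B^{-j}$, uniformly in $k$. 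Replacing $\beta^*_{jk;s}$ by $\beta_{jk;s}$ throughout therefore perturbs the mean, the variance, and every finite cumulant of the statistic by super-polynomially small amounts, negligible against the variance (computed below to be of polynomial order). The same estimate reduces the bias $E\widehat{\Gamma}^*_{j;sG}-\Gamma_{j;s}$, after cancellation of the area weights $\{\lambda_k\}$ in the normalization, to a super-polynomially small masking correction, so the centering is asymptotically exact; it also guarantees that the correlation bound (\ref{1}), proved in \cite{gelmar} for the full-sphere coefficients, transfers to the $\beta^*_{jk;s}$ up to negligible errors.

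Next I would compute the variance and higher cumulants exploiting that $P$ is \emph{proper} complex Gaussian, i.e. $E\beta_{jk;s}\beta_{jk';s}=0$ (already established) while $E\beta_{jk;s}\overline{\beta_{jk';s}}=Cov(\beta_{jk;s},\overline{\beta_{jk';s}})$. The Wick/Isserlis formula then yields $Cov(|\beta_{jk;s}|^2,|\beta_{jk';s}|^2)=|Cov(\beta_{jk;s},\overline{\beta_{jk';s}})|^2$, so the variance of the numerator is $\sum_{k,k'}|Cov(\beta_{jk;s},\overline{\beta_{jk';s}})|^2$. Using $E|\beta_{jk;s}|^2\approx B^{-\alpha j}$ (from (\ref{var}) and $\lambda_{jk}\approx B^{-2j}$) together with (\ref{1}) and the convolution inequality (\ref{2}), both the diagonal and the off-diagonal contributions are of order $B^{(2-2\alpha)j}$, which fixes the normalization. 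For the $p$-th cumulant I would apply the Diagram Formula (Proposition~\ref{prop2}) to $Cum(|\beta_{jk_1;s}|^2,\dots,|\beta_{jk_p;s}|^2)$: the connected diagrams surviving $E\beta\beta=0$ are the cyclic ones, giving chains $Cov(\beta_{k_1},\overline{\beta_{k_2}})\cdots Cov(\beta_{k_p},\overline{\beta_{k_1}})$ and their conjugates. Bounding each factor by (\ref{1}) and summing the chain over $k_2,\dots,k_p$ by iterating (\ref{2}) collapses the cycle and leaves $O(1)$, so with $k_1$ ranging over $\approx B^{2j}$ points $|Cum_p|\lesssim B^{2j}(B^{-\alpha j})^p$. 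Hence $|Cum_p|/Var^{p/2}\lesssim B^{2j}B^{-\alpha pj}/B^{(1-\alpha)pj}=B^{(2-p)j}\to 0$ for every $p\ge 3$, which delivers the CLT.

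The main obstacle is the cumulant estimation of the preceding paragraph. One must set up the Diagram Formula correctly for a quadratic form in a proper complex Gaussian field, keeping track of which pairings vanish and of the conjugation pattern that produces genuinely \emph{cyclic} (rather than arbitrary connected) chains, and then show that each such chain of correlations can be summed by repeated use of (\ref{2}) so that exactly one free index survives to contribute the decisive factor $B^{2j}$. The masking step is conceptually routine once the buffer $G^{\varepsilon}$ and the localization (\ref{3}) are invoked, but it is essential for it to feed into the cumulant bounds, since it is what legitimizes applying (\ref{1}) to the masked coefficients $\beta^*_{jk;s}$ appearing in every sum above.
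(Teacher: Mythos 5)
Your proposal is correct and follows essentially the same route as the paper: dispose of the mask via the localization bound (\ref{3}) on the buffer $G^{\varepsilon}$ (so that $E|\beta_{jk;s}-\beta^{\ast}_{jk;s}|^{2}$ is negligible and the analysis reduces to the unmasked coefficients), compute the variance as $\sum_{k,k'}|E\beta_{jk;s}\overline{\beta_{jk';s}}|^{2}\approx B^{2(1-\alpha)j}$ using (\ref{1}) and (\ref{2}), and kill the higher cumulants through the cyclic chains produced by the Diagram Formula. The only real difference is that the paper invokes the Nualart--Peccati fourth-moment theorem so that only $Cum_{4}$ needs to be bounded, whereas you bound every cumulant of order $p\ge 3$ directly; both are standard for quadratic functionals in the second Wiener chaos and rest on the identical estimates.
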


\begin{proof}
The proof will be basically in two steps; define
\begin{equation}
\widehat{\Gamma }_{j;sG}:=\left\{ \sum_{k:\xi _{jk}\in \mathbb{S}%
^{2}\backslash G^{\varepsilon }}\lambda _{k}\right\} ^{-1}\sum_{k:\xi
_{jk}\in \mathbb{S}^{2}\backslash G^{\varepsilon }}\left| \beta
_{jk;s}\right| ^{2}\text{ ,}  \label{specestuf}
\end{equation}%
which is clearly an unfeasible version of (\ref{specest}), where the $\beta
_{jk;s}^{\ast }$ have been replaced by the coefficients (in the observed
region) evaluated without gaps. The idea will be to show that%

\begin{equation*}
\frac{\widehat{\Gamma }_{j;sG}-\Gamma _{j;s}}{\sqrt{Var\left\{ \widehat{%
\Gamma }_{j;sG}\right\} }} \rightarrow _{d}N(0,1)\text{ , }\frac{\sqrt{%
Var\left\{ \widehat{\Gamma }_{j;sG}\right\} }}{\sqrt{Var\left\{ \widehat{%
\Gamma }_{j;sG}^{\ast }\right\} }}\rightarrow 1
\end{equation*}
\begin{equation*}
\text{ and }\frac{\widehat{\Gamma }_{j;sG}^{\ast }-\widehat{\Gamma }_{j;sG}}{%
\sqrt{Var\left\{ \widehat{\Gamma }_{j;sG}^{\ast }\right\} }} \rightarrow
_{p}0\text{ , as }j\rightarrow \infty \text{ .}
\end{equation*}

The proof of these three statements is provided in separate Propositions
below.
\end{proof}

\begin{proposition}
\label{prop21} As $j\rightarrow \infty ,$ under Condition \ref{specon} we
have%
\begin{equation*}
\frac{\widehat{\Gamma }_{j;sG}-\Gamma _{j;s}}{\sqrt{Var\left\{ \widehat{%
\Gamma }_{j;sG}\right\} }}\rightarrow _{d}N(0,1)\text{ .}
\end{equation*}
\end{proposition}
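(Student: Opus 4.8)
The plan is to view $\widehat{\Gamma}_{j;sG}$ as a quadratic functional of the jointly Gaussian family $\{\beta_{jk;s}\}$ (Gaussianity and isotropy come from Condition \ref{specon}) and to establish asymptotic normality by the method of cumulants, which is exactly what the diagram machinery of Proposition \ref{prop2} is built for. Writing $\beta_{jk;s}=\beta_{jk;s}^{(1)}+i\beta_{jk;s}^{(2)}$ with jointly centred Gaussian real and imaginary parts, we have $|\beta_{jk;s}|^{2}=(\beta_{jk;s}^{(1)})^{2}+(\beta_{jk;s}^{(2)})^{2}$, so $|\beta_{jk;s}|^{2}-E|\beta_{jk;s}|^{2}$ is a second-order Hermite form in standardized Gaussians and lives in the second Wiener chaos. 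Hence the standardized statistic
\[
Z_{j}:=\frac{\widehat{\Gamma}_{j;sG}-\Gamma_{j;s}}{\sqrt{Var\{\widehat{\Gamma}_{j;sG}\}}}
\]
also belongs to a fixed (second) chaos. After checking from (\ref{var}) and the cubature normalization that the deterministic weighting makes the estimator centred at $\Gamma_{j;s}$, so that centring by $\Gamma_{j;s}$ agrees with centring by the mean, it suffices to prove $Var(Z_{j})\to 1$ (automatic after normalization) together with $Cum_{q}(Z_{j})\to 0$ for every $q\ge 3$; since $N(0,1)$ is moment-determinate, this forces $Z_{j}\to_{d}N(0,1)$. (Equivalently one could invoke the fourth-moment theorem and verify only $q=4$.)

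First I would pin down the exact order of the denominator. Since $E\beta_{jk;s}\beta_{jk';s}=0$ (computed just above in the excerpt), the complex Wick/Isserlis formula collapses to $Cov(|\beta_{jk;s}|^{2},|\beta_{jk';s}|^{2})=\big|Cov(\beta_{jk;s},\overline{\beta_{jk';s}})\big|^{2}$, so by (\ref{cov})
\[
Var\{\widehat{\Gamma}_{j;sG}\}=\Big\{\textstyle\sum_{k}\lambda_{k}\Big\}^{-2}\sum_{k,k'}\big|Cov(\beta_{jk;s},\overline{\beta_{jk';s}})\big|^{2}.
\]
Every summand is nonnegative, so the diagonal $k=k'$ already gives a lower bound of the right order: using $\lambda_{jk}\approx B^{-2j}$, $card\,\mathcal{X}_{j}\approx B^{2j}$, (\ref{var}) and (\ref{gamb}), the diagonal contributes $\approx B^{-2j}\Gamma_{j;s}^{2}$. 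For the matching upper bound I would insert the localization estimate (\ref{1}) into the off-diagonal terms and sum over $k'$ by the convolution inequality (\ref{2}) (taken with $k=k''$), giving $\sum_{k'}(1+B^{j}d(\xi_{jk},\xi_{jk'}))^{-2M}=O(1)$. Thus $Var\{\widehat{\Gamma}_{j;sG}\}\approx B^{-2j}\Gamma_{j;s}^{2}$, i.e. $\sqrt{Var}\approx B^{-j}\Gamma_{j;s}$.

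For the higher cumulants I would apply Proposition \ref{prop2} to the vector of real and imaginary parts. Only \emph{connected} diagrams survive in $Cum_{q}$, and for a purely quadratic form the connected diagrams are exactly the ``circular'' ones, so that $Cum_{q}(\widehat{\Gamma}_{j;sG})$ equals, up to a combinatorial constant $C_{q}$ and the factor $\{\sum_{k}\lambda_{k}\}^{-q}$, a sum of circular chains
\[
\sum_{k_{1},\dots,k_{q}} v_{jk_{1}}\cdots v_{jk_{q}}\,Cov(\beta_{jk_{1}},\overline{\beta_{jk_{2}}})\,Cov(\beta_{jk_{2}},\overline{\beta_{jk_{3}}})\cdots Cov(\beta_{jk_{q}},\overline{\beta_{jk_{1}}}),
\]
where $v_{jk}=E|\beta_{jk;s}|^{2}\approx B^{-2j}\Gamma_{j;s}$. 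Bounding each covariance by (\ref{1}) and contracting the chain by $q-1$ successive applications of (\ref{2}) collapses the inner sums to a constant, leaving one free index and hence a factor $\approx B^{2j}$; collecting the powers yields $Cum_{q}(\widehat{\Gamma}_{j;sG})\approx \Gamma_{j;s}^{q}B^{(2-2q)j}$. Dividing by $(\sqrt{Var})^{q}\approx\Gamma_{j;s}^{q}B^{-qj}$ gives $Cum_{q}(Z_{j})\approx B^{(2-q)j}\to 0$ for every $q\ge 3$, which is what we need.

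The hard part will be the combinatorial book-keeping in this last step: one must verify that the diagram formula genuinely produces only circular connected diagrams for the complex quadratic form (it is precisely the real/imaginary decomposition together with the vanishing of $E\beta_{jk;s}\beta_{jk';s}$ that eliminates all other connected configurations), and then control the iterated circular convolutions uniformly in $q$ so that the accumulated constants $c_{M}^{q-1}$ do not spoil the $B^{(2-q)j}$ decay. A secondary, less delicate point (thanks to the positivity of the variance summands) is to make the variance lower bound fully rigorous so that the normalization is tight; the restriction of the sum to $\mathcal{X}_{j}\cap(\mathbb{S}^{2}\setminus G^{\varepsilon})$ only affects constants and is harmless for the asymptotics.
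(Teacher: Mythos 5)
Your proposal is correct and follows essentially the same route as the paper: the variance is pinned down by the diagonal lower bound plus the localization bound (\ref{1}) and the summation inequality (\ref{2}), and asymptotic normality is reduced to cumulant control via the Diagram Formula, with the connected diagrams of the quadratic form being exactly the circular chains you describe. The only (minor) difference is that the paper invokes the Nualart--Peccati fourth-moment theorem and checks only $Cum_{4}$, whereas you propose to bound all cumulants $Cum_{q}$, $q\geq 3$ --- an option you yourself note is interchangeable with the fourth-moment shortcut.
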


\begin{proof}
Notice that
\begin{equation*}
\left( \sum_{k:\xi _{jk}\in \mathbb{S}^{2}\backslash G^{\varepsilon
}}\lambda _{k}\right) ^{2}Var\left( \widehat{\Gamma }_{j;sG}\right) =Var%
\left[ \sum_{k}\left| \beta _{jk;s}\right| ^{2}\right] =\sum_{k,k^{\prime
}}\left| E\beta _{jk;s}\overline{\beta _{jk^{\prime };s}}\right| ^{2}
\end{equation*}%
\begin{equation*}
=\sum_{k,k^{\prime }}\lambda _{jk}\lambda _{jk^{\prime }}\left|
\sum_{l}b^{2}\left( \frac{\sqrt{e_{ls}}}{B^{j}}\right) C_{l}\frac{\left(
2l+1\right) }{4\pi }K^{ls}\left( \xi _{jk},\xi _{jk^{\prime }}\right)
\right| ^{2}.
\end{equation*}%
By standard manipulations we obtain the upper bound
\begin{equation*}
Var\left[ \sum_{k}\left| \beta _{jk;s}\right| ^{2}\right]  \\
\leq C_{M}B^{2\left( 2-\alpha \right) j}\sum_{k,k^{\prime }}\lambda
_{jk}\lambda _{jk^{\prime }}\frac{1}{\left[ 1+B^{j}d(\xi _{jk},\xi
_{jk^{\prime }})\right] ^{2M}}
\end{equation*}%
\begin{equation*}
\leq C_{M}B^{2\left( 2-\alpha \right) j}\left[ \sup_{k^{\prime }}\lambda
_{jk^{\prime }}\right] \sum_{k}\lambda _{jk}\sum_{k^{\prime }}\frac{1}{\left[
1+d(\xi _{jk},\xi _{jk^{\prime }})\right] ^{2M}}
\end{equation*}%
\begin{equation*}
=\sum_{k}\lambda _{jk}O(B^{2\left( 1-\alpha \right) j})\text{ },
\end{equation*}%
in view of (\ref{1}) (\ref{gamb}) and $\lambda _{jk}\approx B^{-2j}.$ On the
other hand, we also have the trivial lower bound%
\begin{equation*}
\sum_{k,k^{\prime }}\left| E\beta _{jk;s}\overline{\beta _{jk^{\prime };s}}%
\right| ^{2}\geq \sum_{k}\left| E\beta _{jk;s}\overline{\beta _{jk;s}}%
\right| ^{2}=\Gamma _{j;s}^{2}\sum_{k}\lambda _{jk}^{2}\geq c\sum_{k}\lambda
_{jk}B^{2\left( 1-\alpha \right) j},
\end{equation*}%
whence we have%
\begin{equation}
Var\left\{ \sum_{k}\left| \beta _{jk;s}\right| ^{2}\right\} \approx \left(
\sum_{k}\lambda _{jk}\right) \left( B^{2\left( 1-\alpha \right) j}\right)
\text{ .}  \label{xiao4}
\end{equation}%
By recent results in \cite{np,petudor,noupe} it suffices to focus on
fourth-order cumulant; the proof that
\begin{equation*}
Cum_{4}\left\{ \frac{\sum_{k}\left| \beta _{jk;s}\right| ^{2}-\left(
\sum_{k}\lambda _{jk}\right) \Gamma _{j;s}}{\sqrt{Var\left\{ \sum_{k}\left|
\beta _{jk;s}\right| ^{2}\right\} }}\right\} \rightarrow 0\text{ as }%
j\rightarrow \infty \text{ ,}
\end{equation*}%
is a standard application of the Diagram Formula, indeed we have%

\begin{equation*}
Cum_{4}\left\{ \sum_{k}\left| \beta _{jk;s}\right| ^{2}-\left(
\sum_{k}\lambda _{jk}\right) \Gamma _{j;s}\right\}
\end{equation*}%
\begin{equation*}
=6\sum_{k_{1},k_{2},k_{3},k_{4}}E\beta _{jk_{1};s}\overline{\beta
_{jk_{2};s}}E\beta _{jk_{2};s}\overline{\beta _{jk_{3};s}}E\beta _{jk_{3};s}%
\overline{\beta _{jk_{4};s}}E\beta _{jk_{4};s}\overline{\beta _{jk_{1};s}}
\end{equation*}%
\begin{equation*}
\leq C\left( \Gamma _{j;s}\right) ^{4}\left( \sum_{k}\lambda _{jk}\right)
\left[ \sup_{k^{\prime }}\lambda _{jk^{\prime }}\right] ^{3}=\left(
\sum_{k}\lambda _{jk}\right) O\left( B^{\left( 2-4\alpha \right) j}\right)
\text{ ,}
\end{equation*}%

in view of (\ref{1}) and (\ref{2}). Thus the Proposition is established.
\end{proof}

Next we turn to the following

\begin{proposition}
\label{prop22} As $j\rightarrow \infty ,$ under Condition \ref{specon} we
have%
\begin{equation*}
\frac{\sqrt{Var\left\{ \widehat{\Gamma }_{j;sG}\right\} }}{\sqrt{Var\left\{
\widehat{\Gamma }_{j;sG}^{\ast }\right\} }}\rightarrow 1
\end{equation*}
\end{proposition}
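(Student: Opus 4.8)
The plan is to reduce the statement to a bound on the variance of the difference of the two estimators and then to exploit the localization (\ref{3}) of spin needlets to show that the ``leakage'' of each coefficient across the boundary of the mask is negligible. Write $S=\widehat{\Gamma}_{j;sG}$ and $S^{\ast}=\widehat{\Gamma}_{j;sG}^{\ast}$. Since the forward triangle inequality applied to the $L^{2}$ norms of the centred parts gives
$$\left| \sqrt{Var(S^{\ast})}-\sqrt{Var(S)}\right| \leq \sqrt{Var(S^{\ast}-S)}\text{ ,}$$
it suffices to prove $Var(S^{\ast}-S)/Var(S)\to 0$; the ratio in the statement then tends to $1$. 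Recall from (\ref{xiao4}) that $Var(S)\approx B^{2(1-\alpha)j}$, the normalizing factor $\sum_{k}\lambda_{k}$ being bounded above and below by positive constants (it approximates the area of $\mathbb{S}^{2}\setminus G^{\varepsilon}$), so the target is $Var(S^{\ast}-S)=o\!\left(B^{2(1-\alpha)j}\right)$, and the matching $(\sum_{k}\lambda_{k})^{-2}$ factors will cancel in the ratio.

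The key step, which I regard as the main obstacle, is a leakage estimate. Setting $\delta_{jk;s}:=\beta_{jk;s}^{\ast}-\beta_{jk;s}=-\int_{G}P(x)\,\overline{\psi_{jk;s}}(x)\,dx$, I would show that for every $M$ there is $C_{M}>0$ with $E\left|\delta_{jk;s}\right|^{2}\leq C_{M}B^{(2-2M)j}$ uniformly over $\xi_{jk}\in\mathbb{S}^{2}\setminus G^{\varepsilon}$. This follows from Cauchy--Schwarz, $|\delta_{jk;s}|^{2}\leq \left(\int_{G}|P|^{2}\right)\left(\int_{G}|\psi_{jk;s}|^{2}\right)$, together with two facts. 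First, $E|P(x)|^{2}=\sum_{l}\frac{2l+1}{4\pi}C_{l}$ is a finite constant by (\ref{addup}) and Condition \ref{specon} (this is where $\alpha>2$ is used), so $E\int_{G}|P|^{2}\,dx=|G|\sum_{l}\frac{2l+1}{4\pi}C_{l}<\infty$. Second, since $\xi_{jk}\notin G^{\varepsilon}$ we have $d(\xi_{jk},x)\geq\varepsilon$ for all $x\in G$, so inserting (\ref{3}) and integrating in geodesic polar coordinates around $\xi_{jk}$ gives $\int_{G}|\psi_{jk;s}(x)|^{2}dx\leq c_{M}B^{2j}\int_{\varepsilon}^{\pi}\frac{\sin\theta\,d\theta}{(1+B^{j}\theta)^{2M}}\leq C_{M}B^{(2-2M)j}$. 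The essential point is that this decay in $B^{j}$ is faster than any fixed power.

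With the leakage bound in hand, the remainder is moment bookkeeping. Expanding $|\beta_{jk;s}^{\ast}|^{2}-|\beta_{jk;s}|^{2}=2\,\mathrm{Re}\!\left(\overline{\beta_{jk;s}}\,\delta_{jk;s}\right)+|\delta_{jk;s}|^{2}$, I would use $Var(S^{\ast}-S)\leq E|S^{\ast}-S|^{2}$ and the triangle inequality in $L^{2}$ term by term over $k$. Using Gaussianity (so fourth moments are controlled by squared second moments, $E|\delta_{jk;s}|^{4}\lesssim (E|\delta_{jk;s}|^{2})^{2}$ and $E|\beta_{jk;s}|^{4}\lesssim (E|\beta_{jk;s}|^{2})^{2}$), the value $E|\beta_{jk;s}|^{2}\approx B^{-\alpha j}$ from (\ref{var})--(\ref{gamb}), the leakage estimate, and the count $\mathrm{card}\{k\}\approx B^{2j}$, one obtains
$$\left\| \sum_{k}\left(|\beta_{jk;s}^{\ast}|^{2}-|\beta_{jk;s}|^{2}\right)\right\|_{L^{2}}\lesssim C_{M}\left(B^{(3-\alpha/2-M)j}+B^{(4-2M)j}\right)\text{ .}$$
Both exponents fall strictly below $1-\alpha$ as soon as $M>2+\alpha/2$; since (\ref{3}) holds for every $M$, choosing $M$ large yields $Var(S^{\ast}-S)=o\!\left(B^{2(1-\alpha)j}\right)$, which completes the argument. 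I note that this crude summation over the $\approx B^{2j}$ cubature points is admissible precisely because the leakage is smaller than any power of $B^{-j}$; a sharper route would expand $Var(S^{\ast}-S)$ by the Diagram Formula (Proposition \ref{prop2}) and use (\ref{1})--(\ref{2}) exactly as in Proposition \ref{prop21}, but it is not needed here.
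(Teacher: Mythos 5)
Your proof is correct, but it is organized differently from the paper's. The paper compares the two variances directly: by the Diagram Formula both are double sums of squared covariances, so it suffices to bound $\sum_{k,k^{\prime }}\bigl( \vert E\beta _{jk;s}\overline{\beta _{jk^{\prime };s}}\vert ^{2}-\vert E\beta _{jk;s}^{\ast }\overline{\beta _{jk^{\prime };s}^{\ast }}\vert ^{2}\bigr)$, which the paper does via a difference-of-products identity, Cauchy--Schwarz, the same leakage estimate $E\vert \beta _{jk;s}-\beta _{jk;s}^{\ast }\vert ^{2}=O\bigl( B^{2j}[1+B^{j}\varepsilon ]^{-2M}\bigr)$ that you derive from the localization (\ref{3}), and then the correlation decay (\ref{1}) to perform the double sum over $k,k^{\prime }$, arriving at the condition $M>1+\alpha /2$. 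You instead reduce everything to $Var(S^{\ast }-S)=o(Var(S))$ via the reverse triangle inequality for the $L^{2}$ norm of the centred variables, and then sum crudely over the $\approx B^{2j}$ cubature points without ever invoking (\ref{1}) or (\ref{2}), paying only a slightly larger threshold $M>2+\alpha /2$ (harmless, since $M$ is arbitrary). Two remarks on what each route buys: your reduction essentially subsumes the content of the paper's Proposition \ref{prop23} (which is precisely an $L^{2}$ bound on $S^{\ast }-S$), so a single computation would serve both propositions, which is more economical than the paper's two overlapping arguments; on the other hand, the paper's use of (\ref{1}) gives sharper intermediate bounds and is the pattern one must follow anyway in Proposition \ref{prop21}, where the leakage is not available to absorb a lossy summation. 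Your bookkeeping checks out: the exponents $3-\alpha /2-M$ and $4-2M$ both fall below $1-\alpha $ for $M>2+\alpha /2$, and the Gaussian fourth-moment bounds you invoke are legitimate since $\beta _{jk;s}$ and $\delta _{jk;s}$ are jointly Gaussian functionals of the field.
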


\begin{proof}
Again in view of the Diagram Formula, it is enough to focus on

\begin{equation*}
Var\left( \sum_{k:\xi _{jk}\in \mathbb{S}^{2}\backslash G^{\varepsilon
}}\left| \beta _{jk;s}\right| ^{2}\right) -Var\left( \sum_{k:\xi _{jk}\in
\mathbb{S}^{2}\backslash G^{\varepsilon }}\left| \beta _{jk;s}^{\ast
}\right| ^{2}\right)
\end{equation*}
\begin{equation*}
=O(\sum_{k,k^{\prime }}\left| E\beta _{jk;s}\overline{\beta _{jk^{\prime
};s}}\right| ^{2}-\sum_{k,k^{\prime }}\left| E\beta _{jk;s}^{\ast }\overline{%
\beta _{jk^{\prime };s}^{\ast }}\right| ^{2})\text{ }.
\end{equation*}

Now notice that%
\begin{equation*}
\left| E\beta _{jk;s}\overline{\beta _{jk^{\prime };s}}\right| ^{2}-\left|
E\beta _{jk;s}^{\ast }\overline{\beta _{jk^{\prime };s}^{\ast }}\right| ^{2}
\end{equation*}%
\begin{equation*}
=E\beta _{jk;s}\overline{\beta _{jk^{\prime };s}}\left( E\overline{\beta
_{jk;s}}\beta _{jk^{\prime };s}-E\overline{\beta _{jk;s}^{\ast }}\beta
_{jk^{\prime };s}^{\ast }\right)
\end{equation*}%
\begin{equation}
+E\overline{\beta _{jk;s}^{\ast }}\beta _{jk^{\prime };s}^{\ast }\left(
E\beta _{jk;s}\overline{\beta _{jk^{\prime };s}}-E\beta _{jk;s}^{\ast }%
\overline{\beta _{jk^{\prime };s}^{\ast }}\right) ,  \label{xiao2}
\end{equation}%
and
\begin{equation*}
E\overline{\beta _{jk;s}}\beta _{jk^{\prime };s}-E\overline{\beta
_{jk;s}^{\ast }}\beta _{jk^{\prime };s}^{\ast }=E\overline{\beta _{jk;s}}%
\left( \beta _{jk^{\prime };s}-\beta _{jk^{\prime };s}^{\ast }\right)
+E\beta _{jk^{\prime };s}^{\ast }\left( \overline{\beta _{jk;s}}-\overline{%
\beta _{jk;s}^{\ast }}\right)
\end{equation*}%
\begin{equation}
\leq \left\{ E|\overline{\beta _{jk;s}}|^{2}\right\} ^{1/2}\left\{ E|\beta
_{jk^{\prime };s}-\beta _{jk^{\prime };s}^{\ast }|^{2}\right\}
^{1/2}+\left\{ E|\beta _{jk^{\prime };s}^{\ast }|^{2}\right\} ^{1/2}\left\{
E|\overline{\beta _{jk;s}}-\overline{\beta _{jk;s}^{\ast }}|^{2}\right\}
^{1/2}.  \label{xiao1}
\end{equation}%
Hence

\begin{equation*}
E|\beta _{jk;s}-\beta _{jk;s}^{\ast }|^{2} \\
\leq E\left\{ \int_{G}P(x)\overline{\psi _{jk;s}}(x)dx\right\} ^{2}\leq
E\left\{ \sup_{x\in G}\left\{ \overline{\psi _{jk;s}}(x)\right\}
\int_{G^{\varepsilon }}|P(x)|dx\right\} ^{2}
\end{equation*}%
\begin{equation*}
\leq \left[ \sup_{x\in G}\left\{ \overline{\psi _{jk;s}}(x)\right\} \right]
^{2}E\left\{ \int_{G}|P(x)|dx\right\} ^{2}
\end{equation*}%
\begin{equation*}
\leq \left[ \sup_{x\in G}\left\{ \overline{\psi _{jk;s}}(x)\right\} \right]
^{2}E\left\{ \left[ \int_{G}1dx\right] \left[ \int_{G}|P(x)|^{2}dx\right]
\right\}
\end{equation*}%
\begin{equation*}
\leq 4\pi \left[ \sup_{x\in G}\left\{ \overline{\psi _{jk;s}}(x)\right\} %
\right] ^{2}E\left\{ \left[ \int_{G}|P(x)|^{2}dx\right] \right\} =O\left(
\frac{B^{2j}}{[1+B^{j}\varepsilon ]^{2M}}\right) \text{ .}
\end{equation*}%

Now recall that%
\begin{equation*}
E|\beta _{jk;s}|^{2}=O\left( B^{-\alpha j}\right) \text{ },
\end{equation*}%
whence $E|\beta _{jk;s}^{\ast }|^{2}=O\left( B^{-\alpha j}\right) ,$ if $%
M>\alpha /2+1.$ Hence, in view of (\ref{xiao1})%
\begin{equation}
\left| E\overline{\beta _{jk;s}^{\ast }}\beta _{jk^{\prime };s}^{\ast }-E%
\overline{\beta _{jk;s}}\beta _{jk^{\prime };s}\right| \leq \frac{CB^{\left(
1-\alpha /2\right) j}}{[1+B^{j}\varepsilon ]^{M}}\text{ },  \label{xiao3}
\end{equation}%
for some constant $C>0.$ Also, from (\ref{xiao2}) and (\ref{xiao3}) we
obtain that%
\begin{equation*}
\sum_{k,k^{\prime }}\left( \left| E\beta _{jk;su}\overline{\beta
_{jk^{\prime };su}}\right| ^{2}-\left| E\beta _{jk;su}^{\ast }\overline{%
\beta _{jk^{\prime };su}^{\ast }}\right| ^{2}\right)
\end{equation*}%
\begin{equation*}
\leq \sum_{k,k^{\prime }}\left( \left| E\beta _{jk;s}\overline{\beta
_{jk^{\prime };s}}\right| +\left| E\beta _{jk;s}^{\ast }\overline{\beta
_{jk^{\prime };s}^{\ast }}\right| \right) O\left( \frac{B^{-j\alpha /2}}{%
[1+B^{j}\varepsilon ]^{M}}\right)
\end{equation*}%
\begin{equation*}
\leq O\left( \frac{B^{\left( 1-\alpha /2\right) j}}{[1+B^{j}\varepsilon
]^{M}}\right) \Gamma _{j;s}\sum_{k,k^{\prime }}\frac{C_{M}\sqrt{\lambda
_{jk}\lambda _{jk^{\prime }}}}{\left\{ 1+B^{j}d(\xi _{jk},\xi _{jk^{\prime
}})\right\} ^{M}}
\end{equation*}%
\begin{equation*}
\leq O\left( \frac{B^{3\left( 1-\alpha /2\right) j}}{[1+B^{j}\varepsilon
]^{M}}\right) \sum_{k}\lambda _{jk}\text{ }.
\end{equation*}%

Recall from (\ref{xiao4}) that $Var\left( \sum_{k:\xi _{jk}\in \mathbb{S}%
^{2}\backslash G}\left| \beta _{jk;s}\right| ^{2}\right) =\left(
\sum_{k}\lambda _{jk}\right) O\left( B^{2\left( 1-\alpha \right) j}\right) .$
Hence for $M$ large enough, that is $M>1+\alpha /2,$ the statement of the
Proposition is established.
\end{proof}

\begin{proposition}
\label{prop23} As $j\rightarrow \infty ,$ under Condition \ref{specon} we
have%
\begin{equation*}
\frac{\widehat{\Gamma }_{j;sG}^{\ast }-\widehat{\Gamma }_{j;sG}}{\sqrt{%
Var\left\{ \widehat{\Gamma }_{j;sG}^{\ast }\right\} }}\rightarrow _{p}0\text{
.}
\end{equation*}
\end{proposition}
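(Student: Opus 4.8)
The plan is to prove convergence in probability by controlling the first absolute moment of the normalized difference, so that Markov's inequality does the rest. Writing
\[
\widehat{\Gamma}_{j;sG}^{\ast} - \widehat{\Gamma}_{j;sG}
= \left( \sum_{k:\xi_{jk}\in \mathbb{S}^{2}\backslash G^{\varepsilon}} \lambda_{k} \right)^{-1}
\sum_{k:\xi_{jk}\in \mathbb{S}^{2}\backslash G^{\varepsilon}}
\left( |\beta_{jk;s}^{\ast}|^{2} - |\beta_{jk;s}|^{2} \right),
\]
I would first treat each summand through the elementary identity $|\beta_{jk;s}^{\ast}|^{2} - |\beta_{jk;s}|^{2} = (\beta_{jk;s}^{\ast} - \beta_{jk;s})\overline{\beta_{jk;s}^{\ast}} + \beta_{jk;s}(\overline{\beta_{jk;s}^{\ast}} - \overline{\beta_{jk;s}})$ and then apply the Cauchy--Schwarz inequality in $L^{2}(\Omega)$, so that the cross terms are bounded by products of second moments of $\beta_{jk;s}$, $\beta_{jk;s}^{\ast}$ and of the discrepancy $\beta_{jk;s}-\beta_{jk;s}^{\ast}$.

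The two analytic inputs required are already in hand from the proof of Proposition \ref{prop22}. On the one hand, the gap-induced discrepancy obeys $E|\beta_{jk;s}-\beta_{jk;s}^{\ast}|^{2} = O\!\left(B^{2j}/[1+B^{j}\varepsilon]^{2M}\right)$; this is precisely the localization estimate (\ref{3}) exploited in Proposition \ref{prop22}, and it is sharp because each retained cubature point $\xi_{jk}\in\mathbb{S}^{2}\backslash G^{\varepsilon}$ lies at distance at least $\varepsilon$ from the masked region $G$, so the needlet $\psi_{jk;s}$ is quasi-exponentially small on $G$. On the other hand, $E|\beta_{jk;s}|^{2} = O(B^{-\alpha j})$, and the same bound holds for $E|\beta_{jk;s}^{\ast}|^{2}$. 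Feeding these into the Cauchy--Schwarz estimate produces the per-term bound
\[
E\left| \, |\beta_{jk;s}^{\ast}|^{2} - |\beta_{jk;s}|^{2} \, \right|
= O\!\left( \frac{B^{(1-\alpha/2)j}}{[1+B^{j}\varepsilon]^{M}} \right).
\]

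It then remains to sum and normalize. Summing over the $O(B^{2j})$ retained indices and noting that the normalizing factor $\sum_{k}\lambda_{k}$ is bounded above and below by positive constants, I get $E|\widehat{\Gamma}_{j;sG}^{\ast} - \widehat{\Gamma}_{j;sG}| = O\!\left(B^{(3-\alpha/2)j}/[1+B^{j}\varepsilon]^{M}\right)$. By (\ref{xiao4}) and Proposition \ref{prop22} one has $\sqrt{Var\{\widehat{\Gamma}_{j;sG}^{\ast}\}} \approx B^{(1-\alpha)j}$, so the normalized first moment is $O\!\left(B^{(2+\alpha/2)j}/[1+B^{j}\varepsilon]^{M}\right)$. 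Because the concentration bound (\ref{3}) is valid for every $M\in\mathbb{N}$, I am free to take $M>2+\alpha/2$; since $[1+B^{j}\varepsilon]^{M}$ grows like $B^{Mj}\varepsilon^{M}$, the ratio tends to $0$ as $j\rightarrow\infty$, and Markov's inequality yields the stated convergence in probability.

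The only genuinely delicate point is the exponent bookkeeping, i.e. verifying that the polynomial factor $B^{(2+\alpha/2)j}$ lost in the crude term-by-term summation is always overcome by the quasi-exponential decay in the distance $\varepsilon$ of the retained points from the mask. This is not a real obstacle, because the freedom to choose $M$ arbitrarily large is inherited directly from the needlet localization (\ref{3}) and the uncorrelation bound (\ref{1}), both of which hold for all $M$. I would note that the first-moment (that is, $L^{1}$) route is mildly wasteful compared with bounding the variance of the difference directly, but it is the cleanest argument and is entirely sufficient for the convergence in probability asserted here.
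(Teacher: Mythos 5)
Your proof is correct, and the exponent bookkeeping checks out: the per-term bound $E\bigl|\,|\beta_{jk;s}^{\ast}|^{2}-|\beta_{jk;s}|^{2}\,\bigr|=O\bigl(B^{(1-\alpha/2)j}/[1+B^{j}\varepsilon]^{M}\bigr)$, the crude sum over $O(B^{2j})$ indices, and the normalization by $\sqrt{Var\{\widehat{\Gamma}_{j;sG}^{\ast}\}}\approx B^{(1-\alpha)j}$ lead to exactly the same threshold $M>2+\alpha/2$ that the paper imposes. The route is genuinely different in one respect: the paper controls the \emph{second} moment of $\sum_{k}\bigl(|\beta_{jk;s}|^{2}-|\beta_{jk;s}^{\ast}|^{2}\bigr)$, expanding the square via Gaussianity into a lengthy list of products of pairwise covariances, each of which is then bounded term by term using the same two inputs you use (the localization-driven bound on $E|\beta_{jk;s}-\beta_{jk;s}^{\ast}|^{2}$ and $E|\beta_{jk;s}|^{2}=O(B^{-\alpha j})$); you instead control the \emph{first} absolute moment and invoke Markov. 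Your version is shorter and avoids the Gaussian fourth-moment expansion entirely; and since the paper's double sum is also bounded crudely (without exploiting decay in $d(\xi_{jk},\xi_{jk'})$), the square root of its second-moment bound coincides with your $L^{1}$ bound, so nothing is lost. What the paper's $L^{2}$ computation buys in principle is a variance estimate for the difference itself, which would be the natural starting point if one wanted rates rather than mere negligibility; for the statement as given, your argument is entirely sufficient.
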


\begin{proof}
We have%
\begin{equation*}
E\left\{ \left[ \sum_{k:\xi _{jk}\in \mathbb{S}^{2}\backslash G^{\varepsilon
}}\lambda _{k}\right] \left( \widehat{\Gamma }_{j;sG}^{\ast }-\widehat{%
\Gamma }_{j;sG}\right) \right\} ^{2}=E\left\{ \sum_{k}\left| \beta
_{jk;s}\right| ^{2}-\left| \beta _{jk;s}^{\ast }\right| ^{2}\right\} ^{2},
\end{equation*}%
which we can expand as follows%

\begin{equation*}
E\left\{ \sum_{k}\overline{\beta _{jk;s}}\left( \beta _{jk;s}-\beta
_{jk;s}^{\ast }\right) +\sum_{k}\beta _{jk;s}^{\ast }\left( \overline{\beta
_{jk;s}}-\overline{\beta _{jk;s}^{\ast }}\right) \right\} ^{2}
\end{equation*}%
\begin{equation*}
=E\left\{ \sum_{k}\overline{\beta _{jk;s}}\left( \beta _{jk;s}-\beta
_{jk;s}^{\ast }\right) \right\} ^{2}+E\left\{ \sum_{k}\beta _{jk;s}^{\ast
}\left( \overline{\beta _{jk;s}}-\overline{\beta _{jk;s}^{\ast }}\right)
\right\} ^{2}
\end{equation*}%
\begin{equation*}
+2E\left\{ \sum_{k}\overline{\beta _{jk;s}}\left( \beta _{jk;s}-\beta
_{jk;s}^{\ast }\right) \right\} \left\{ \sum_{k}\beta _{jk;s}^{\ast }\left(
\overline{\beta _{jk;s}}-\overline{\beta _{jk;s}^{\ast }}\right) \right\}
\end{equation*}%

\begin{equation*}
=\sum_{k,k^{\prime }}\left[ E\overline{\beta _{jk;s}}\left( \beta
_{jk^{\prime };s}-\beta _{jk^{\prime };s}^{\ast }\right) E\overline{\beta
_{jk^{\prime };s}}\left( \beta _{jk;s}-\beta _{jk;s}^{\ast }\right) \right.
\end{equation*}%
\begin{equation*}
\left. +E\beta _{jk;s}^{\ast }\left( \overline{\beta _{jk^{\prime };s}}-%
\overline{\beta _{jk^{\prime };s}^{\ast }}\right) E\beta _{jk^{\prime
};s}^{\ast }\left( \overline{\beta _{jk;s}}-\overline{\beta _{jk;s}^{\ast }}%
\right) \right]
\end{equation*}%
\begin{equation*}
+\left\{ \sum_{k}E\overline{\beta _{jk;s}}\left( \beta _{jk;s}-\beta
_{jk;s}^{\ast }\right) \right\} ^{2}+\left\{ \sum_{k}E\beta _{jk;s}^{\ast
}\left( \overline{\beta _{jk;s}}-\overline{\beta _{jk;s}^{\ast }}\right)
\right\} ^{2}
\end{equation*}%
\begin{equation*}
+2\left\{ \sum_{k}E\overline{\beta _{jk;s}}\left( \beta _{jk;s}-\beta
_{jk;s}^{\ast }\right) \right\} \left\{ \sum_{k}E\beta _{jk;s}^{\ast }\left(
\overline{\beta _{jk;s}}-\overline{\beta _{jk;s}^{\ast }}\right) \right\}
\end{equation*}%
\begin{equation*}
+2\left\{ \sum_{k,k^{\prime }}E\overline{\beta _{jk;s}}\left( \beta
_{jk^{\prime };s}-\beta _{jk^{\prime };s}^{\ast }\right) E\beta _{jk^{\prime
};s}^{\ast }\left( \overline{\beta _{jk;s}}-\overline{\beta _{jk;s}^{\ast }}%
\right) \right\}
\end{equation*}%
\begin{equation*}
+2\left\{ \sum_{k,k^{\prime }}E\overline{\beta _{jk;s}}\beta _{jk^{\prime
};s}^{\ast }E\left( \overline{\beta _{jk;s}}-\overline{\beta _{jk;s}^{\ast }}%
\right) \left( \beta _{jk^{\prime };s}-\beta _{jk^{\prime };s}^{\ast
}\right) \right\} \text{ .}
\end{equation*}%

Now recall again
\begin{equation*}
E\left| \beta _{jk;s}\right| ^{2},E\left| \beta _{jk;s}^{\ast }\right|
^{2}\leq CB^{-\alpha j},\text{ and }E\left| \beta _{jk;s}-\beta
_{jk;s}^{\ast }\right| ^{2}\leq \frac{C^{\prime }B^{2j}}{[1+B^{j}\varepsilon
]^{M}}\text{ },
\end{equation*}%
whence from the same steps as in the previous Proposition, we have
\begin{equation*}
E\overline{\beta _{jk;s}}\left( \beta _{jk^{\prime };s}-\beta _{jk^{\prime
};s}^{\ast }\right) ,E\overline{\beta _{jk;s}^{\ast }}\left( \beta
_{jk^{\prime };s}-\beta _{jk^{\prime };s}^{\ast }\right) \leq \frac{%
CB^{\left( 1-\alpha /2\right) j}}{[1+B^{j}\varepsilon ]^{M}}\text{ }.
\end{equation*}%
It follows that%
\begin{equation*}
E\left\{ \sum_{k}\left| \beta _{jk;s}\right| ^{2}-\left| \beta _{jk;s}^{\ast
}\right| ^{2}\right\} ^{2}\leq \frac{CB^{\left( 6-\alpha \right) j}}{%
[1+B^{j}\varepsilon ]^{2M}}\text{ }.
\end{equation*}%
By arguments in the previous Propositions, we know that%
\begin{equation*}
Var\left\{ \left[ \sum_{k:\xi _{jk}\in \mathbb{S}^{2}\backslash
G^{\varepsilon }}\lambda _{k}\right] \widehat{\Gamma }_{j;sG}^{\ast
}\right\} \approx \left( \sum_{k:\xi _{jk}\in \mathbb{S}^{2}\backslash
G^{\varepsilon }}\lambda _{jk}\right) B^{2\left( 1-\alpha \right) j};
\end{equation*}%
thus the statement is established, provided we take $M>2+\alpha /2.$
\end{proof}

\begin{remark}
In general the expression for $Var\left\{ \widehat{\Gamma }_{j;sG}\right\}
,Var\left\{ \widehat{\Gamma }_{j;sG}^{\ast }\right\} $ depends on the
unknown angular power spectrum. However, the normalizing factors can be
consistently estimated by subsampling techniques, following the same steps
as in \cite{bkmpBer}.
\end{remark}

\section{Detection of asymmetries}

In this Section, we shall consider one more possible application of spin
needlets to problems of interest for Cosmology. In particular, a highly
debated issue in modern Cosmology relates to the existence of ``features'',
i.e. asymmetries in the distribution of CMB radiation (for instance between
the Northern and the Southern hemispheres, in Galactic coordinates). These
issues have been the object of dozens of physical papers, in the last few
years, some of them exploiting scalar needlets, see \cite{pietrobon1}.

In order to investigate this issue, we shall employ a similar technique as %
\cite{bkmpBer} for the scalar case. More precisely, we shall focus on the
difference between the estimated angular power spectrum over two different
regions of the sky. Let us consider $A_{1},A_{2},$ two subsets of $\mathbb{S}%
^{2}$ such that $A_{1}\cap A_{2}=\emptyset ;$ we do not assume that $%
A_{1}\cup A_{2}=\mathbb{S}^{2},$ i.e. we admit the presence of missing
observations. For practical applications, $A_{1}$ and $A_{2}$ can be
visualized as the spherical caps centered at the north and south pole $N,S$
(i.e. $A_{1}=\left\{ x\in \mathbb{S}^{2}:d(x,N)\leq \pi /2\right\} ,$ $%
A_{2}=\left\{ x\in \mathbb{S}^{2}:d(x,S)\leq \pi /2\right\} ,$ but the
results would hold without any modification for general subsets and could be
easily generalized to a higher number of regions. We shall then focus on the
statistic%
\begin{equation*}
\frac{\widehat{\Gamma }_{j;sA_{1}}^{\ast }-\widehat{\Gamma }%
_{j;sA_{2}}^{\ast }}{\sqrt{Var\left\{ \widehat{\Gamma }_{j;sA_{1}}^{\ast
}\right\} +Var\left\{ \widehat{\Gamma }_{j;sA_{2}}^{\ast }\right\} }}\text{ ,%
}
\end{equation*}%
where

\begin{equation*}
\widehat{\Gamma }_{j;sA_{1}}^{\ast } :=\left\{ \sum_{k:\xi _{jk}\in
A_{1}^{\varepsilon }}\lambda _{k}\right\} ^{-1}\sum_{k:\xi _{jk}\in
A_{1}^{\varepsilon }}\left| \beta _{jk;s}^{\ast }\right| ^{2},\text{ }
\end{equation*}
\begin{equation*}
\widehat{\Gamma }_{j;sA_{2}}^{\ast } :=\left\{ \sum_{k:\xi _{jk}\in
A_{2}^{\varepsilon }}\lambda _{k}\right\} ^{-1}\sum_{k:\xi _{jk}\in
A_{2}^{\varepsilon }}\left| \beta _{jk;s}^{\ast }\right| ^{2}\text{ , some }%
\varepsilon >0\text{ .}
\end{equation*}

We are here able to establish the following

\begin{proposition}
\label{features}As $j\rightarrow \infty $ , we have%
\begin{equation*}
\left(
\begin{array}{c}
\left[ Var\left\{ \widehat{\Gamma }_{j;sA_{1}}^{\ast }\right\} \right]
^{-1/2}\left( \widehat{\Gamma }_{j;sA_{1}}^{\ast }-\Gamma _{j;s}\right) \\
\left[ Var\left\{ \widehat{\Gamma }_{j;sA_{2}}^{\ast }\right\} \right]
^{-1/2}\left( \widehat{\Gamma }_{j;sA_{2}}^{\ast }-\Gamma _{j;s}\right)%
\end{array}%
\right) \rightarrow _{d}N(0_{2},I_{2})\text{ ,}
\end{equation*}%
where $(0_{2},I_{2})$ are, respectively, the $2\times 1$ vector of zeros and
the $2\times 2$ identity matrix.
\end{proposition}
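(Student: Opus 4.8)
The plan is to deduce the joint convergence from the multivariate version of the fourth moment theorem (\cite{np,petudor,noupe}), leaning on the one-dimensional results already available. First I would reduce to the unfeasible estimators: by Propositions \ref{prop22} and \ref{prop23}, for each $i=1,2$ the standardised difference between $\widehat{\Gamma}_{j;sA_i}^*$ and its gap-free counterpart $\widehat{\Gamma}_{j;sA_i}$ is $o_p(1)$, while the two normalising variances are asymptotically equivalent; a componentwise application of Slutsky's Lemma shows that it is enough to prove the statement for the vector built from the unstarred $\widehat{\Gamma}_{j;sA_1},\widehat{\Gamma}_{j;sA_2}$. After recentring, each component is a quadratic form in the jointly Gaussian coefficients $\{\beta_{jk;s}\}$ and therefore lives in the second Wiener chaos, so the Peccati--Tudor criterion applies: since the marginal convergences to $N(0,1)$ are granted by Proposition \ref{prop21} (applied with $A_1^\varepsilon$, respectively $A_2^\varepsilon$, as the observed region, both estimators being centred at the common $\Gamma_{j;s}$), joint asymptotic normality will follow as soon as the $2\times2$ covariance matrix of the standardised vector converges to $I_2$.

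The diagonal entries of that matrix equal $1$ by the standardisation, so the entire task is to show that the off-diagonal entry, i.e. the correlation between the two components, tends to $0$. Since $E\beta_{jk;s}\beta_{jk';s}=0$, the diagram formula gives
\begin{equation*}
Cov\left(\sum_{k:\xi_{jk}\in A_1^\varepsilon}|\beta_{jk;s}|^2,\sum_{k':\xi_{jk'}\in A_2^\varepsilon}|\beta_{jk';s}|^2\right)=\sum_{\substack{k:\xi_{jk}\in A_1^\varepsilon\\ k':\xi_{jk'}\in A_2^\varepsilon}}\left|E\beta_{jk;s}\overline{\beta_{jk';s}}\right|^2,
\end{equation*}
and the deterministic normalisations $\sum_k\lambda_k$ cancel out of the correlation, so it suffices to bound this sum. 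Using $E|\beta_{jk;s}|^2=O(B^{-\alpha j})$ together with the localisation estimate (\ref{1}), each summand is $O\bigl(B^{-2\alpha j}\{1+B^jd(\xi_{jk},\xi_{jk'})\}^{-2M}\bigr)$.

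The key point is now geometric. Because $A_1$ and $A_2$ are disjoint, a pair $(\xi_{jk},\xi_{jk'})$ with $\xi_{jk}\in A_1^\varepsilon$, $\xi_{jk'}\in A_2^\varepsilon$ can be at small distance only if both points lie within $O(B^{-j})$ of the common boundary; there are $O(B^j)$ such cubature points, and for each of them the sum over the opposite region is $O(1)$ by (\ref{2}). This bounds the double sum by $O(B^j)$ and hence the cross-covariance by $O(B^{(1-2\alpha)j})$. Comparing with the variance normalisation $Var\{\sum_k|\beta_{jk;s}|^2\}\approx B^{2(1-\alpha)j}$ from (\ref{xiao4}), the off-diagonal correlation is $O(B^{-j})\to0$, whence the standardised covariance matrix converges to $I_2$ and the multivariate fourth moment theorem yields the claim.

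The step I expect to be the main obstacle is precisely this boundary estimate: in the relevant examples (such as the two hemispheres) $A_1^\varepsilon$ and $A_2^\varepsilon$ are adjacent, so there is no uniform positive lower bound on $d(\xi_{jk},\xi_{jk'})$ to exploit. The required decay must instead be extracted by counting the cubature points in the $O(B^{-j})$-collar of the shared boundary and weighting them by the quasi-exponentially decaying tails (\ref{1})--(\ref{2}); controlling this collar contribution, rather than any fresh probabilistic input, is where the work lies.
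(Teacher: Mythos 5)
Your proposal is correct, but it departs from the paper's argument in two substantive ways. First, for the passage from marginal to joint normality the paper uses the Cram\'er--Wold device: it forms $uX+vY$ and explicitly bounds the mixed fourth cumulants $Cum(X,Y,Y,Y)$, $Cum(X,X,Y,Y)$, $Cum(X,X,X,Y)$ by the same diagram-formula/localization estimates as in Proposition \ref{prop21}; you instead invoke the multivariate fourth moment theorem of Peccati--Tudor, which lets you skip the mixed cumulants entirely once the marginals and the covariance matrix are under control. That is a legitimate streamlining (the components are indeed second-chaos elements, and the paper already cites \cite{petudor}). Second, and more importantly, your treatment of the off-diagonal term solves a harder problem than the paper actually faces. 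In the paper the estimators $\widehat{\Gamma}_{j;sA_i}^{\ast}$ only use cubature points lying at distance at least $\varepsilon$ inside each region, so every cross-pair $(\xi_{jk},\xi_{jk'})$ satisfies $d(\xi_{jk},\xi_{jk'})\geq 2\varepsilon$; the paper then bounds the whole cross-covariance by $C(\Gamma_{j;s})^2[\sup_k\lambda_{jk}]^2[1+2B^{j}\varepsilon]^{-2(M-1)}=O(B^{2(1-\alpha-M)j})$, which is negligible for $M$ large with no geometric counting at all. The ``main obstacle'' you identify -- adjacent regions with no uniform separation -- is thus designed out of the statement. Your collar argument (roughly $O(B^{j})$ cubature points within $O(B^{-j})$ of the common boundary, each contributing an $O(1)$ inner sum, giving a cross-covariance of order $B^{(1-2\alpha)j}$ against a variance of order $B^{(2-2\alpha)j}$) is nevertheless correct provided the common boundary is a rectifiable curve (true for the hemispheres and for the ``general subsets'' the paper has in mind), and it buys something real: the conclusion would survive taking $\varepsilon=0$, i.e.\ genuinely adjacent observation regions, at the price of only an $O(B^{-j})$ rate for the decorrelation instead of the paper's quasi-exponential one.
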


\begin{proof}
By the Cramer-Wold device, the proof can follow very much the same steps as
for the univariate case. We first establish the asymptotic uncorrelation of
the two components, i.e. we show that%
\begin{equation}
\lim_{j\rightarrow \infty }\left[ Var\left\{ \widehat{\Gamma }%
_{j;sA_{1}}^{\ast }\right\} Var\left\{ \widehat{\Gamma }_{j;sA_{2}}^{\ast
}\right\} \right] ^{-1/2}E\left\{ \left( \widehat{\Gamma }_{j;sA_{1}}^{\ast
}-\Gamma _{j;s}\right) \left( \widehat{\Gamma }_{j;sA_{2}}^{\ast }-\Gamma
_{j;s}\right) \right\} =0\text{ .}  \label{7}
\end{equation}%
Now%
\begin{equation*}
E\left( \widehat{\Gamma }_{j;sA_{1}}^{\ast }-\Gamma _{j;s}\right) \left(
\widehat{\Gamma }_{j;sA_{2}}^{\ast }-\Gamma _{j;s}\right) =E\left( \widehat{%
\Gamma }_{j;sA_{1}}^{\ast }-\Gamma _{j;s}\right) E\left( \widehat{\Gamma }%
_{j;sA_{2}}^{\ast }-\Gamma _{j;s}\right)
\end{equation*}%
\begin{equation}
+\left\{ \sum_{k:\xi _{jk}\in \mathbb{S}^{2}\backslash A_{1}^{\varepsilon
}}\lambda _{k}\sum_{k:\xi _{jk}\in \mathbb{S}^{2}\backslash
A_{2}^{\varepsilon }}\lambda _{k}\right\} ^{-1}\sum_{k:\xi _{jk}\in \mathbb{S%
}^{2}\backslash A_{1}^{\varepsilon }}\sum_{k^{\prime }:\xi _{jk^{\prime
}}\in \mathbb{S}^{2}\backslash A_{2}^{\varepsilon }}\left| E\beta
_{jk;s}^{\ast }\overline{\beta _{jk^{\prime };s}^{\ast }}\right| ^{2}.
\label{6}
\end{equation}%
In view of (\ref{1}) and Proposition \ref{prop23}, we have

\begin{equation*}
\left| (\ref{6})\right| \leq \left( \Gamma _{j;s}\right) ^{2}\sum_{k:\xi
_{jk}\in \mathbb{S}^{2}\backslash A_{1}^{\varepsilon }}\sum_{k^{\prime }:\xi
_{jk^{\prime }}\in \mathbb{S}^{2}\backslash A_{2}^{\varepsilon }}\frac{%
C\lambda _{jk}\lambda _{jk^{\prime }}}{\left[ 1+B^{j}d(\xi _{jk},\xi
_{jk^{\prime }})\right] ^{2M}}
\end{equation*}
\begin{equation*}
\leq \frac{C\left( \Gamma _{j;s}\right) ^{2}\left[ \sup_{k}\lambda _{jk}%
\right] ^{2}}{\left[ 1+2B^{j}\varepsilon \right] ^{2\left( M-1\right) }}%
=O\left( B^{2\left( 1-\alpha -M\right) j}\right) .
\end{equation*}
Thus (\ref{7})\ is established, in view of (\ref{xiao4}) and Propositions (%
\ref{prop22}), (\ref{prop23}). For the fourth order cumulant, given any
generic constants $u,v$, we shall write
\begin{equation}
X=u\left[ Var\left\{ \widehat{\Gamma }_{j;sA_{1}}^{\ast }\right\} \right]
^{-1/2}\left( \widehat{\Gamma }_{j;sA_{1}}^{\ast }-\Gamma _{j;s}\right) ,
\label{xiao10}
\end{equation}%
and%
\begin{equation}
Y=v\left[ Var\left\{ \widehat{\Gamma }_{j;sA_{2}}^{\ast }\right\} \right]
^{-1/2}\left( \widehat{\Gamma }_{j;sA_{2}}^{\ast }-\Gamma _{j;s}\right)
\text{ .}  \label{xiao11}
\end{equation}%
Recall that%

\begin{equation*}
Cum_{4}\left( X+Y\right) =Cum_{4}\left( X\right) +Cum_{4}\left( Y\right)
+4Cum(X,Y,Y,Y)
\end{equation*}
\begin{equation*}
+6Cum(X,X,Y,Y)+4Cum(X,X,X,Y)\text{ };
\end{equation*}

by results in the previous Section, we have immediately $Cum_{4}\left(
X\right) ,$ $Cum_{4}\left( Y\right) \rightarrow 0,$ as $j\rightarrow \infty
. $ On the other hand, in view of Proposition \ref{prop23} and the
equivalence between convergence in probability and in $L^{p}$ for Gaussian
subordinated processes (see \cite{JAN}), we can replace $\widehat{\Gamma }%
_{j;sA_{i}}^{\ast }$ by $\widehat{\Gamma }_{j;sA_{i}}$ in (\ref{xiao10}) and
(\ref{xiao11}), and we have easily%

\begin{equation*}
Cum(X,Y,Y,Y)
\end{equation*}%
\begin{equation*}
\leq CB^{4\left( \alpha -1\right) j}\left( \Gamma _{j;s}\right)
^{2}\sum_{k:\xi _{jk}\in \mathbb{S}^{2}\backslash A_{1}^{\varepsilon
}}\sum_{\xi _{jk_{1}},..,\xi _{jk_{3}}\in \mathbb{S}^{2}\backslash
A_{2}^{\varepsilon }}\frac{\lambda _{jk}\lambda _{jk_{1}}\lambda
_{jk_{3}}\lambda _{jk_{3}}}{\left[ 1+B^{j}d(\xi _{jk},\xi _{jk_{1}})\right]
^{M}}
\end{equation*}%
\begin{equation*}
\times \frac{1}{\left[ 1+B^{j}d(\xi _{jk_{2}},\xi _{jk_{1}})\right] ^{M}\left[
1+B^{j}d(\xi _{jk_{3}},\xi _{jk_{2}})\right] ^{M}\left[ 1+B^{j}d(\xi
_{jk},\xi _{jk_{3}})\right] ^{M}}
\end{equation*}%
\begin{equation*}
\leq \frac{CB^{4\left( \alpha -1\right) j}\left( \Gamma _{j;s}\right) ^{2}%
\left[ \sup_{k}\lambda _{jk}\right] ^{4}}{\left[ 1+2B^{j}\varepsilon \right]
^{2\left( M-1\right) }}=O\left( B^{-2\left( M+1\right) j}\right) .
\end{equation*}%

Similarly, we have
\begin{equation*}
Cum(X,X,X,Y),Cum(X,X,Y,Y)\leq CB^{-2\left( M+1\right) j}.
\end{equation*}%
Thus the Proposition is established, provided we choose $M>2+\alpha .$
\end{proof}

\begin{remark}
An obvious consequence of Proposition \ref{features} is
\begin{equation*}
\frac{\widehat{\Gamma }_{j;sA_{1}}^{\ast }-\widehat{\Gamma }%
_{j;sA_{2}}^{\ast }}{\sqrt{Var\left\{ \widehat{\Gamma }_{j;sA_{1}}^{\ast
}\right\} +Var\left\{ \widehat{\Gamma }_{j;sA_{2}}^{\ast }\right\} }}%
\rightarrow _{d}N(0,1)\text{ .}
\end{equation*}%
This result provides the asymptotic justification to implement on
polarization data the same testing procedures as those considered for
instance by \cite{pietrobon1} to search for features and asymmetries in CMB\
scalar data; i.e., it is possible to estimate for instance the angular power
spectrum on the Northern and Southern hemisphere and test whether they are
statistically different, as suggested by some empirical findings of the
recent cosmological literature.
\end{remark}

\section{\protect\bigskip Estimation with noise}

In the previous sections, we worked under a simplifying assumption, i.e. we
figured that although observations on some parts of the sphere were
completely unattainable, data on the remaining part were available free of
noise. In this Section, we aim at relaxing this assumption; in particular,
we shall consider the more realistic circumstances where, while we still
take some regions of the sky to be completely unobservable, even for those
where observations are available the latter are partially contaminated by
noise.

To understand our model for noise, we need to review a view basic facts on
the underlying physics. A key issue about (scalar and polarized) CMB\
radiation experiments is that they actually measure radiation across a set
of different electromagnetic frequencies, ranging from 30 GHz to nearly 900.
One of the key predictions of Cosmology, whose experimental confirmation led
to the Nobel Prize for J.Mather in 2006, is that CMB radiation in all its
components follows a blackbody pattern of dependence over frequency. More
precisely, the intensity $I_{A}$ is distributed along to the various
frequencies according to the Planckian curve of blackbody emission

\begin{equation}
I_{A}(v,P)=\frac{2h\nu ^{3}}{c^{2}}\frac{1}{\exp (\frac{h\nu }{k_{B}A})-1}%
\text{ .}  \label{planck}
\end{equation}%
Here, $A$ is a scalar quantity which is the only free parameter in (\ref%
{planck}), and therefore uniquely determines the shape of the curve: we have
$A=T$ for the traditional temperature data, whereas for polarization
measurements one can take $A=Q,U.$ Now the point is that, although there are
also a number of foreground sources (such as galaxies or intergalactic dust)
that emit radiation on these frequencies; all these astrophysical components
(other than CMB) do not follow a blackbody curve.

We shall hence assume that $D$ detectors are available at frequencies $\nu
_{1},...,\nu _{D}$, so that the following vector random field is observed:%
\begin{equation*}
P_{v_{r}}(x)=P(x)+N_{v_{r}}(x)\text{ ;}
\end{equation*}%
here, both $P(x),N_{v}(x)$ are taken to be Gaussian zero-mean, mean square
continuous random fields, independent among them and such that, while the
signal $P(x)$ is identical across all frequencies, the noise $N_{v}(x)$ is
not. More precisely, we shall assume for noise the same regularity
conditions as for the signal $P,$ again under the justification that they
seem mild and general:

\begin{condition}
\label{specN}The (spin) random field $N_{v}(x)$ is Gaussian and isotropic,
independent from $P(x)$ and with total angular power spectrum $\left\{
C_{lN}\right\} $ such that%
\begin{equation*}
C_{lN}=l^{-\gamma }g_{N}(l)>0\text{ , where }c_{0N}^{-1}\leq g_{N}(l)\leq
c_{0N}\text{ , }\gamma >2\text{ , }l\in \mathbb{N}\text{ ,}
\end{equation*}%
and for every $r\in \mathbb{N}$ there exist $c_{r}>0$ such that%
\begin{equation*}
|\frac{d^{r}}{du^{r}}g_{N}(u)|\leq c_{rN}u^{-r}\text{ , }u\in (\left|
s\right| ,\infty )\text{ .}
\end{equation*}
\end{condition}

It follows from our previous assumptions that for each frequency $\nu _{r}$
we shall be able to evaluate%
\begin{equation*}
\int_{\mathbb{S}^{2}}P_{v_{r}}(x)\overline{\psi _{jk;s}}(x)dx=:\beta
_{jk;sr}=\beta _{jk;sP}+\beta _{jk;sN_{r}}
\end{equation*}%
where clearly%
\begin{equation*}
\beta _{jk;sP}=\int_{\mathbb{S}^{2}}P(x)\overline{\psi _{jk;s}}(x)dx\text{ ,
}\beta _{jk;sN_{r}}=\int_{\mathbb{S}^{2}}N_{v_{r}}(x)\overline{\psi _{jk;s}}%
(x)dx\text{ .}
\end{equation*}%
Now it is immediate to note that%
\begin{equation*}
E|\beta _{jk;sr}|^{2} =E|\beta _{jk;sP}+\beta _{jk;sN_{r}}|^{2}
\end{equation*}%
\begin{equation*}
=E\beta _{jk;sP}\overline{\beta _{jk;sP}}+E\beta _{jk;sN_{r}}\overline{%
\beta _{jk;sN_{r}}}+E\beta _{jk;sN_{r}}\overline{\beta _{jk;sP}}+E\beta
_{jk;sP}\overline{\beta _{jk;sN_{r}}}
\end{equation*}%
\begin{equation*}
=E|\beta _{jk;sP}|^{2}+E|\beta _{jk;sN_{r}}|^{2}\text{ ,}
\end{equation*}%

so that the estimator $\sum_{k}|\beta _{jk;sr}|^{2}$ will now be upward
biased. In the next subsections we shall discuss two possible solutions for
dealing with this bias terms, along the lines of (\cite{polenta}), and we
will provide statistical procedures to test for estimation bias. We note
first that correlation of needlet coefficients across different channels are
provided by%
\begin{equation*}
E\beta _{jk;sr}\overline{\beta _{jk^{\prime };sr}}=E\beta _{jk;sP}\overline{%
\beta _{jk^{\prime };sP}}+E\beta _{jk;sN_{r}}\beta _{jk^{\prime };sN_{r}}%
\text{ .}
\end{equation*}%
Denote%
\begin{equation*}
\Gamma _{j;s}^{N}=\sum_{k}E|\beta _{jk;sN_{r}}|^{2}=\sum_{l}b^{2}(\frac{%
\sqrt{e_{ls}}}{B^{j}})\frac{2l+1}{4\pi }C_{lN}\text{ };
\end{equation*}%
as before, it is easy to obtain that $C_{1}B^{\left( 2-\gamma \right) j}\leq
\Gamma _{j;s}^{N}\leq C_{2}B^{\left( 2-\gamma \right) j}.$ With the same
discussion as for (\ref{1}) provided by \cite{gelmar}, we have that, under
Condition \ref{specon} and \ref{specN},
\begin{equation}
\left| Corr\left( \beta _{jk;sr},\overline{\beta _{jk^{\prime };sr}}\right)
\right| \leq \frac{C_{M}}{\left\{ 1+B^{j}d(\xi _{jk},\xi _{jk^{\prime
}})\right\} ^{M}}\text{ , for all }M\in \mathbb{N}\text{ }.  \label{4}
\end{equation}

\subsection{The Needlet Auto-Power Spectrum Estimator}

In many circumstances, it can be reasonable to assume that the angular power
spectrum of the noise component, $C_{lN},$ is known in advance to the
experimenter. For instance, if noise is primarily dominated by instrumental
components, then its behaviour may possibly be calibrated before the
experimental devices are actually sent in orbit, or otherwise by observing a
peculiar region where the signal has been very tightly measured by previous
experiments. Assuming the angular power spectrum of noise to be known, the
expected value for the bias term is immediately derived:%
\begin{equation*}
E|\beta _{jk;sN_{r}}|^{2}=\sum_{l}b^{2}(\frac{\sqrt{e_{ls}}}{B^{j}})\frac{%
2l+1}{4\pi }C_{lN_{r}}\text{ ,}
\end{equation*}%
whence it is natural to propose the bias-corrected estimator

\begin{equation*}
\widetilde{\Gamma }_{j}^{AP} :=\frac{1}{D}\sum_{k}\sum_{r}\left\{ \left|
\beta _{jk;sr}\right| ^{2}-E|\beta _{jk;sN_{r}}|^{2}\right\}
\end{equation*}%
\begin{equation*}
=\frac{1}{D}\sum_{k}\sum_{r}\left\{ \left( \beta _{jk;sP}+\beta
_{jk;sN_{r}}\right) \left( \overline{\beta _{jk;sP}}+\overline{\beta
_{jk;sN_{r}}}\right) -E|\beta _{jk;sN_{r}}|^{2}\right\}
\end{equation*}%
\begin{equation*}
=\sum_{k}\left| \beta _{jk;sP}\right| ^{2}+\frac{1}{D}\left\{
\sum_{k}\sum_{r}\left( \beta _{jk;sP}\overline{\beta _{jk;sN_{r}}}+\beta
_{jk;sN_{r}}\overline{\beta _{jk;sP}}+\left[ \left| \beta
_{jk;sN_{r}}\right| ^{2}-E|\beta _{jk;sN_{r}}|^{2}\right] \right) \right\}
\text{ .}
\end{equation*}%

We label the previous statistics the needlet auto-power spectrum estimator
(AP, compare \cite{polenta}). The derivation of the following Proposition is
rather standard, and hence omitted for brevity's sake.

\begin{proposition}
As $j\rightarrow \infty ,$ we have
\begin{equation*}
\frac{\widetilde{\Gamma }_{j}^{AP}-\Gamma _{j}}{\sqrt{Var\left\{ \widetilde{%
\Gamma }_{j}^{AP}\right\} }}\rightarrow _{d}N(0,1)\text{ ,}
\end{equation*}%
where%
\begin{equation*}
Var\left\{ \widetilde{\Gamma }_{j}^{AP}\right\} =O(B^{2\left( 1-\min \left(
\alpha ,\gamma \right) \right) j})\text{ .}
\end{equation*}
\end{proposition}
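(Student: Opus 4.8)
The plan is to observe that, once centered, the bias-corrected estimator $\widetilde{\Gamma}_j^{AP}-E\widetilde{\Gamma}_j^{AP}$ is a finite linear combination of products of two jointly Gaussian variables drawn from the family $\{\beta_{jk;sP}\}\cup\{\beta_{jk;sN_r}\}$, and is therefore an element of the second Wiener chaos. Hence, exactly as in Proposition \ref{prop21}, I would invoke the fourth-moment results of \cite{np,petudor,noupe}: the asserted convergence to $N(0,1)$ will follow as soon as (i) I identify the order of $Var\{\widetilde{\Gamma}_j^{AP}\}$ and (ii) I show that the fourth cumulant of the normalized statistic tends to zero. The first step is to use the independence of $P$ and $N_r$ to annihilate every mixed odd covariance, so that only the self-covariances of the signal and of each noise channel survive. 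This splits the estimator into three blocks: the signal square $\sum_k|\beta_{jk;sP}|^2$, the centered noise square $\frac{1}{D}\sum_k\sum_r\bigl(|\beta_{jk;sN_r}|^2-E|\beta_{jk;sN_r}|^2\bigr)$, and the mixed term $\frac{1}{D}\sum_k\sum_r\bigl(\beta_{jk;sP}\overline{\beta_{jk;sN_r}}+\text{c.c.}\bigr)$.

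For the variance, the signal block contributes order $B^{2(1-\alpha)j}$ precisely as in (\ref{xiao4}), while the centered-noise block has an identical structure with $C_l$ replaced by $C_{lN}$, hence contributes $B^{2(1-\gamma)j}$; the finitely many channels $r$ only affect the combinatorial constant. Because $P\perp N_r$, the mixed block contributes solely through factorized products $E\beta_{jk;sP}\overline{\beta_{jk';sP}}\cdot E\overline{\beta_{jk;sN_r}}\beta_{jk';sN_r}$, which by (\ref{var}), (\ref{gamb}) and a single application of the summation bound (\ref{2}) is of order $B^{(2-\alpha-\gamma)j}$, an exponent that never exceeds $2\bigl(1-\min(\alpha,\gamma)\bigr)$. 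Summing the three blocks and using $\sum_k\lambda_{jk}\approx 1$ then yields $Var\{\widetilde{\Gamma}_j^{AP}\}\approx B^{2(1-\min(\alpha,\gamma))j}$, which is both the claimed order and the normalization needed below.

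For the fourth cumulant I would apply the Diagram Formula (Proposition \ref{prop2}): since each summand is a degree-two Hermite object, only connected fourth-order diagrams survive, and these reduce to cyclic products of four covariances, each carrying a localization kernel of the type (\ref{4}). Collapsing the fourfold sum over $k_1,\dots,k_4$ by iterating the summation bound (\ref{2}) — exactly the mechanism used in Proposition \ref{prop21} — every such cycle is bounded by $(\sum_k\lambda_{jk})[\sup_k\lambda_{jk}]^3$ times the fourth power of the relevant spectral factor, so that the dominant diagram (the one built entirely from the slower-decaying field, i.e. the one with exponent $\min(\alpha,\gamma)$) gives $Cum_4\{\widetilde{\Gamma}_j^{AP}\}=(\sum_k\lambda_{jk})\,O\bigl(B^{(2-4\min(\alpha,\gamma))j}\bigr)$. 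Dividing by $Var^2\approx B^{4(1-\min(\alpha,\gamma))j}$ produces $Cum_4/Var^2=O(B^{-2j})\to 0$, which closes the argument.

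The main obstacle is the bookkeeping for the mixed signal–noise diagrams. One must check that in every connected fourth-order diagram containing both signal and noise edges the slower-decaying spectrum cannot populate all four covariance factors at once, so that such diagrams stay strictly below the pure-signal (or pure-noise) cycle governed by $\min(\alpha,\gamma)$; a short exponent count shows a diagram with $2p$ signal and $4-2p$ noise edges scales like $B^{(2-2p\alpha-(4-2p)\gamma)j}$, which is maximized at a pure cycle. The same care is needed in the variance to confirm that the mixed block remains subdominant rather than leading, for which the independence assumption and the bounds (\ref{var}), (\ref{gamb}) must be used explicitly, and one must verify that the cross-channel noise correlations entering $Var\bigl(\frac{1}{D}\sum_r(\cdot)\bigr)$ only contribute bounded factors since $D$ is fixed.
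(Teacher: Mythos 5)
The paper gives no proof of this proposition (it is explicitly ``omitted for brevity's sake'' as ``rather standard''), and your proposal supplies exactly the intended standard argument: the same variance-plus-fourth-cumulant scheme via the Diagram Formula and the Nualart--Peccati criterion used in Proposition \ref{prop21}, with the independence of $P$ and the $N_{v_r}$ splitting the second-chaos statistic into mutually uncorrelated signal, noise and mixed blocks. Your exponent bookkeeping is correct --- the mixed variance term of order $B^{(2-\alpha-\gamma)j}\leq B^{2(1-\min(\alpha,\gamma))j}$ is subdominant, and the mixed cumulant cycles, being linear in the number of signal edges, are dominated by the pure cycle of the slower-decaying spectrum --- so the argument is sound and matches the template the paper relies on.
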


As before, the normalizing variance in the denominator can be consistently
estimated by subsampling techniques, along the lines of \cite{bkmpBer}. It
should be noticed that the rate of convergence for $\left\{ \widetilde{%
\Gamma }_{j}^{AP}-\Gamma _{j}\right\} =O(B^{\left( 1-\min \left( \alpha
,\gamma \right) \right) j})$ is the same as in the noiseless case for $%
\gamma \geq \alpha ,$ whereas it slower otherwise, when the noise is
asymptotically dominating. The ``signal-to-noise'' ratio $\Gamma _{j}/\sqrt{%
Var\left\{ \widetilde{\Gamma }_{j}^{AP}\right\} }$ is easily seen to be in
the order of $B^{2j-\alpha j}/B^{\left( 1-\min \left( \alpha ,\gamma \right)
\right) j}=B^{j(1+\min \left( \alpha ,\gamma \right) -\alpha )},$ whence it
decays to zero unless $\alpha \leq \gamma +1.$

\subsection{The Needlet Cross-Power Spectrum estimator}

To handle the bias term, we shall pursue here a different strategy than the
previous subsection, dispensing with any prior knowledge of the spectrum of
the noise component. The idea is to exploit the fact that, while the signal
is perfectly correlated among the different frequency components, noise is
by assumption independent. We shall hence focus on the needlets
cross-angular power spectrum estimator (CP), defined as%

\begin{equation*}
\widetilde{\Gamma }_{j}^{CP} :=\frac{1}{D(D-1)}\sum_{k}\sum_{r_{1}\neq
r_{2}}\beta _{jk;sr_{1}}\overline{\beta _{jk;sr_{2}}}
\end{equation*}%
\begin{equation*}
=\frac{1}{D(D-1)}\sum_{k}\sum_{r_{1}\neq r_{2}}\left( \beta _{jk;sP}+\beta
_{jk;sN_{r_{1}}}\right) \left( \overline{\beta _{jk;sP}}+\overline{\beta
_{jk;sN_{r_{2}}}}\right)
\end{equation*}%
\begin{equation*}
=\sum_{k}\left| \beta _{jk;sP}\right| ^{2}+\frac{1}{D(D-1)}\left\{
\sum_{k}\sum_{r_{1}\neq r_{2}}\left( \beta _{jk;sP}\overline{\beta
_{jk;sN_{r_{2}}}}+\beta _{jk;sN_{r_{1}}}\overline{\beta _{jk;sP}}+\beta
_{jk;sN_{r_{1}}}\overline{\beta _{jk;sN_{r_{2}}}}\right) \right\} \text{ .}
\end{equation*}%

In view of the previous independence assumptions, it is then immediately
seen that the above estimator is unbiased for $\Gamma _{j},$ i.e.
\begin{equation*}
E\widetilde{\Gamma }_{j}^{CP}=\sum_{k}E\left| \beta _{jk;sP}\right|
^{2}=\sum_{l}b^{2}(\frac{\sqrt{e_{ls}}}{B^{j}})\frac{2l+1}{4\pi }C_{l}\text{
.}
\end{equation*}%
We are actually able to establish a stronger result, namely

\begin{proposition}
As $j\rightarrow \infty ,$ we have
\begin{equation*}
\frac{\widetilde{\Gamma }_{j}^{CP}-\Gamma _{j}}{\sqrt{Var\left\{ \widetilde{%
\Gamma }_{j}^{CP}\right\} }}\rightarrow _{d}N(0,1)\text{ , }Var\left\{
\widetilde{\Gamma }_{j}^{CP}\right\} =O(B^{2\left( 1-\min (\alpha ,\gamma
)\right) j})\text{ .}
\end{equation*}
\end{proposition}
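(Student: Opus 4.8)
The plan is to mirror the strategy used for the noiseless estimator, exploiting once more that $\widetilde{\Gamma}_{j}^{CP}$ is a quadratic, hence second-chaos, functional of the jointly Gaussian array $\{\beta_{jk;sP},\beta_{jk;sN_{1}},\dots,\beta_{jk;sN_{D}}\}$. First I would recenter, writing $\widetilde{\Gamma}_{j}^{CP}-\Gamma_{j;s}$ as the sum of the centered signal term $\sum_{k}(|\beta_{jk;sP}|^{2}-E|\beta_{jk;sP}|^{2})$ and the mixed remainder $\frac{1}{D(D-1)}\sum_{k}\sum_{r_{1}\neq r_{2}}(\beta_{jk;sP}\overline{\beta_{jk;sN_{r_{2}}}}+\beta_{jk;sN_{r_{1}}}\overline{\beta_{jk;sP}}+\beta_{jk;sN_{r_{1}}}\overline{\beta_{jk;sN_{r_{2}}}})$, each summand of which has zero mean because of the independence of the noise channels and of signal from noise (Condition \ref{specN}). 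The covariance inputs I would feed into all subsequent estimates are the signal localization bound (\ref{1}), its noise analogue (\ref{4}), the vanishing of every signal-noise and cross-channel noise covariance by independence, together with the orders $\Gamma_{j;s}\approx B^{(2-\alpha)j}$, $\Gamma_{j;s}^{N}\approx B^{(2-\gamma)j}$ and $\lambda_{jk}\approx B^{-2j}$.

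Next I would compute $Var\{\widetilde{\Gamma}_{j}^{CP}\}$ by the second-moment expansion used in Proposition \ref{prop21}, isolating three groups of contributions. The pure-signal quadratic part is of order $B^{2(1-\alpha)j}$ exactly as in (\ref{xiao4}); the signal-noise cross terms produce contributions of the same localized type but carrying one noise spectrum; and the noise-noise products (with $r_{1}\neq r_{2}$) contribute at leading order $B^{2(1-\gamma)j}$ once the localization kernels are summed over $k,k'$ with the convolution bound (\ref{2}). Adding the three groups and keeping the slowest-decaying piece gives the upper bound $Var\{\widetilde{\Gamma}_{j}^{CP}\}=O(B^{2(1-\min(\alpha,\gamma))j})$. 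The matching lower bound, which I need to legitimize the CLT normalization, follows from the diagonal ($k=k'$, matched channel labels) terms, since both $C_{l}$ and $C_{lN}$ are strictly positive under Conditions \ref{specon} and \ref{specN}; this yields $Var\{\widetilde{\Gamma}_{j}^{CP}\}\approx B^{2(1-\min(\alpha,\gamma))j}$.

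For the asymptotic normality, since the normalized statistic lives in a fixed (second) Wiener chaos, by the results of \cite{np,petudor,noupe} it suffices to show that its fourth cumulant, divided by the square of its variance, vanishes. I would expand $Cum_{4}$ by the Diagram Formula (Proposition \ref{prop2}), summing only over connected diagrams on four rows. Channelwise independence forces each edge to join a signal coefficient to a signal coefficient and a noise coefficient to a \emph{same-channel} noise coefficient, so the surviving connected diagrams factorize into products of four covariance kernels, each bounded through (\ref{1}) or (\ref{4}); repeated use of (\ref{2}) collapses the internal $k$-sums and produces, for every group of terms, the same bound as in Proposition \ref{prop21}, namely $Cum_{4}=O(B^{(2-4\min(\alpha,\gamma))j})$. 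Dividing by $(Var)^{2}\approx B^{4(1-\min(\alpha,\gamma))j}$ leaves a quantity of order $B^{-2j}\to 0$, which establishes the claim.

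I expect the delicate point to be the bookkeeping of the multi-channel structure rather than any single estimate. Concretely, one must verify that the cross-channel noise products ($r_{1}\neq r_{2}$) neither inflate the variance beyond $B^{2(1-\gamma)j}$ nor create spurious fourth-cumulant diagrams, which rests entirely on channelwise independence killing the would-be dominant contractions; and one must confirm that it is $\min(\alpha,\gamma)$, not some combination of the two exponents, that governs the rate, while simultaneously securing the variance lower bound so the normalization is non-degenerate. Once these two structural facts are in place, the remaining steps are routine repetitions of the applications of (\ref{1}), (\ref{2}) and (\ref{4}) already carried out in Propositions \ref{prop21}--\ref{prop23}.
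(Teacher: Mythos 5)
Your proposal is correct, and it follows exactly the route the paper has in mind: the paper explicitly omits this proof as ``standard,'' meaning the template of Propositions \ref{prop21}--\ref{prop23} and of the Hausman-test proposition (variance bounds via (\ref{1}), (\ref{2}), (\ref{4}) and the channelwise independence, then a fourth-cumulant bound combined with the Nualart--Peccati criterion of \cite{np,petudor,noupe}), which is precisely what you carry out. Your additional care about the variance lower bound (needed for the normalization to be non-degenerate, and valid because the signal, signal--noise and noise--noise groups are mutually uncorrelated so their variances add) and about which contractions survive channelwise independence is exactly the bookkeeping the omitted proof would have to contain.
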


We omit also this (standard) proof for brevity's sake. We can repeat here
the same comments as in the previous subsection, concerning the possibility
of estimating the normalizing variance by subsampling techniques, along the
lines of \cite{bkmpBer}, and the roles of $\alpha $,$\gamma $ for the rate
of convergence $\left\{ \widetilde{\Gamma }_{j}^{CP}-\Gamma _{j}\right\}
=O(B^{\left( 1-\min \left( \alpha ,\gamma \right) \right) j})$.

\subsection{Hausman Test for Noise Misspecification}

In the previous two subsections, we have considered two alternate estimators
for the angular power spectrum, in the presence of observational noise. It
is a standard result (compare \cite{polenta}) that the auto-power spectrum
estimator enjoys a smaller variance, provided of course that the model for
noise is correct. Loosely speaking, we can hence conclude that the
auto-power spectrum estimator is more efficient when noise is correctly
specified, while the cross-power spectrum estimator is more robust, as it
does not depend on any previous knowledge on the noise angular power
spectrum. An obvious question at this stage is whether the previous results
can be exploited to implement a procedure to search consistently for noise
misspecification. The answer is indeed positive, as we shall show along the
lines of the procedure suggested by \cite{polenta}.

\begin{proposition}
Under Assumptions \ref{specon} and \ref{specN} , we have
\begin{equation*}
\frac{\widetilde{\Gamma }_{j;s}^{CP}-\widetilde{\Gamma }_{j;s}^{AP}}{\sqrt{%
Var\left\{ \widetilde{\Gamma }_{j;s}^{CP}-\widetilde{\Gamma }%
_{j;s}^{AP}\right\} }}\rightarrow _{d}N(0,1),
\end{equation*}%
where%
\begin{equation*}
Var\left\{ \widetilde{\Gamma }_{j;s}^{CP}-\widetilde{\Gamma }%
_{j;s}^{AP}\right\} =O(B^{2\left( 1-\gamma \right) j})
\end{equation*}
\end{proposition}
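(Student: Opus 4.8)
The plan is to exploit the fact that, in the difference $\widetilde{\Gamma}_{j;s}^{CP}-\widetilde{\Gamma}_{j;s}^{AP}$, every term depending on the signal coefficients $\beta_{jk;sP}$ cancels, so that the statistic reduces to a centered quadratic form in the noise coefficients alone. Writing out the two estimators, the pure-signal term $\sum_{k}|\beta_{jk;sP}|^{2}$ is common to both and cancels immediately. For the signal--noise cross terms, summing the CP contribution over $r_{1}\neq r_{2}$, where the summand is independent of the free index, collapses the prefactor $1/[D(D-1)]$ to $1/D$, so that $\frac{1}{D(D-1)}\sum_{k}\sum_{r_{1}\neq r_{2}}(\beta_{jk;sP}\overline{\beta_{jk;sN_{r_{2}}}}+\beta_{jk;sN_{r_{1}}}\overline{\beta_{jk;sP}})$ coincides exactly with the AP cross term $\frac{1}{D}\sum_{k}\sum_{r}(\beta_{jk;sP}\overline{\beta_{jk;sN_{r}}}+\beta_{jk;sN_{r}}\overline{\beta_{jk;sP}})$. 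First I would record this algebraic identity, leaving
$$\widetilde{\Gamma}_{j;s}^{CP}-\widetilde{\Gamma}_{j;s}^{AP}=\frac{1}{D(D-1)}\sum_{k}\sum_{r_{1}\neq r_{2}}\beta_{jk;sN_{r_{1}}}\overline{\beta_{jk;sN_{r_{2}}}}-\frac{1}{D}\sum_{k}\sum_{r}\left\{|\beta_{jk;sN_{r}}|^{2}-E|\beta_{jk;sN_{r}}|^{2}\right\},$$
a centered element of the second Wiener chaos generated by the (jointly Gaussian) noise fields.

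Next I would compute the variance. Because the noise fields $N_{v_{r}}$ are independent across channels with common spectrum $\{C_{lN}\}$, the Diagram Formula (Proposition \ref{prop2}) yields a key simplification: the covariance between the off-diagonal part ($r_{1}\neq r_{2}$) and the diagonal part ($r=r$) vanishes, since a single channel index $r$ cannot be contracted simultaneously with two distinct indices $r_{1},r_{2}$. Hence $Var\{\widetilde{\Gamma}_{j;s}^{CP}-\widetilde{\Gamma}_{j;s}^{AP}\}$ splits into two non-negative pieces. Each piece is, up to the fixed combinatorial factors $D(D-1)$ and $D$, of the form $\sum_{k,k'}|E\beta_{jk;sN}\overline{\beta_{jk';sN}}|^{2}$, which by the correlation bound (\ref{4}), the summation bound (\ref{2}), and $\Gamma_{j;s}^{N}\approx B^{(2-\gamma)j}$, $\lambda_{jk}\approx B^{-2j}$, is of order $B^{2(1-\gamma)j}$, exactly as in (\ref{xiao4}) with $\gamma$ in place of $\alpha$. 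The diagonal terms $k=k'$ furnish the matching lower bound, so the normalization is genuinely of order $B^{2(1-\gamma)j}$; note that, the signal having cancelled, only $\gamma$ (not $\min(\alpha,\gamma)$) enters, consistent with the stated rate.

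With the variance controlled, the central limit theorem follows from the fourth-moment criterion for second-chaos sequences (\cite{np,petudor,noupe}): it suffices to show that the fourth cumulant of the normalized statistic tends to zero. This is again a Diagram-Formula computation, structurally identical to the one in Proposition \ref{prop21} but carried out for the noise coefficients with exponent $\gamma$; the connected diagrams contributing to $Cum_{4}$ are bounded using (\ref{4}) and (\ref{2}) and are of strictly smaller order than the square of the variance.

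The main obstacle I anticipate is the bookkeeping: in both the variance and the fourth-cumulant computations one must track the cubature indices $k$ jointly with the channel indices $r$, and carefully identify which Wick contractions survive under channel independence, as well as which $E\beta\beta$-type pairings vanish because $E\beta_{jk;s}\beta_{jk';s}=0$. Once the surviving diagrams are correctly enumerated, each is dominated exactly as in Propositions \ref{prop21}--\ref{prop23}, and taking $M$ large (depending on $\gamma$) renders all error terms negligible.
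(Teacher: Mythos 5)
Your proposal is correct and follows essentially the same route as the paper: cancel the signal exactly so the statistic becomes a centered second-chaos quadratic form in the noise coefficients alone, compute the variance via the Diagram Formula together with the correlation bound (\ref{4}) to get the order $B^{2(1-\gamma)j}$, and conclude by bounding the fourth cumulant and invoking the fourth-moment criterion of \cite{np}. The only cosmetic difference is that the paper repackages the noise quadratic form as $\sum_{r_1\neq r_2}\left|\beta_{jk;sr_1}-\beta_{jk;sr_2}\right|^2$ minus its mean, whereas you keep the off-diagonal and diagonal channel contributions separate and observe (correctly) that their covariance vanishes by channel independence.
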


\begin{proof}
The proof is again quite standard, and we only need to provide the main
details. Notice first that

\begin{equation*}
\widetilde{\Gamma }_{j;s}^{CP}-\widetilde{\Gamma }_{j;s}^{AP} =\frac{1}{%
D(D-1)}\sum_{k}\sum_{r_{1}\neq r_{2}}\beta _{jk;sr_{1}}\overline{\beta
_{jk;sr_{2}}}-\frac{1}{D}\sum_{k}\sum_{r}\left\{ \left| \beta
_{jk;sr}\right| ^{2}-E|\beta _{jk;sN_{r}}|^{2}\right\}
\end{equation*}%
\begin{equation*}
=\frac{1}{D(D-1)}\sum_{k}\left\{ (D-1)\sum_{r}E|\beta
_{jk;sN_{r}}|^{2}-\sum_{r_{1}\neq r_{2}}\left| \beta _{jk;sr_{1}}-\beta
_{jk;sr_{2}}\right| ^{2}\right\} ,
\end{equation*}%

and applying again the Diagram Formula, we have that%

\begin{equation*}
Var\left( \widetilde{\Gamma }_{j;s}^{CP}-\widetilde{\Gamma }%
_{j;s}^{AP}\right)
\end{equation*}%
\begin{equation*}
=\frac{1}{D^{2}(D-1)^{2}}\sum_{k_{1},k_{2}}\sum_{r_{1}\neq r_{2,}r_{3}\neq
r_{4,}}\left| E\left( \beta _{jk_{1};sr_{1}}-\beta _{jk_{1};sr_{2}}\right)
\left( \overline{\beta _{jk_{2};sr_{3}}}-\overline{\beta _{jk_{2};sr_{4}}}%
\right) \right| ^{2}.
\end{equation*}%

Similarly to the discussion for (\ref{xiao4}), we can show that%
\begin{equation*}
Var\left( \widetilde{\Gamma }_{j;s}^{CP}-\widetilde{\Gamma }%
_{j;s}^{AP}\right) =O\left( D^{2}B^{2\left( 1-\gamma )\right) j}\right) .
\end{equation*}%
Once again, the next step is to consider the fourth order cumulants,

\begin{equation*}
Cum_{4}\left\{ \sum_{k}\left( \sum_{r_{1}\neq r_{2}}\left| \beta
_{jk;sr_{1}}-\beta _{jk;sr_{2}}\right| ^{2}-(D-1)\sum_{r}E|\beta
_{jk;sN_{r}}|^{2}\right) \right\}
\end{equation*}%

\begin{equation*}
=6\sum_{k_{1},..,k_{4}}\sum_{r_{2n}\neq r_{2n-1,}n=1,..,4}E\left( \beta
_{jk_{1};sr_{1}}-\beta _{jk_{1};sr_{2}}\right) \left( \overline{\beta
_{jk_{2};sr_{3}}}-\overline{\beta _{jk_{2};sr_{4}}}\right)
\end{equation*}%
\begin{equation*}
\times E\left( \beta _{jk_{2};sr_{3}}-\beta _{jk_{2};sr_{4}}\right) \left(
\overline{\beta _{jk_{3};sr_{5}}}-\overline{\beta _{jk_{3};sr_{6}}}\right)
E\left( \beta _{jk_{3};sr_{5}}-\beta _{jk_{3};sr_{6}}\right) \left(
\overline{\beta _{jk_{4};sr_{7}}}-\overline{\beta _{jk_{4};sr_{8}}}\right)
\end{equation*}%

\begin{equation*}
\times E\left( \beta _{jk_{4};sr_{7}}-\beta _{jk_{4};sr_{8}}\right) \left(
\overline{\beta _{jk_{1};sr_{1}}}-\overline{\beta _{jk_{1};sr_{2}}}\right)
\end{equation*}%

\begin{equation*}
\leq C_{M}D^{4}\left( \Gamma _{j;s}^{N}\right) ^{4}\sum_{k_{1},..,k_{4}}%
\frac{\lambda _{jk_{1}}\lambda _{jk_{2}}\lambda _{jk_{3}}\lambda _{jk_{4}}}{%
\left[ 1+d(\xi _{jk_{1}},\xi _{jk_{2}})\right] ^{M}\left[ 1+d(\xi
_{jk_{2}},\xi _{jk_{3}})\right] ^{M}}
\end{equation*}%
\begin{equation*}
\times \frac{1}{\left[ 1+d(\xi _{jk_{3}},\xi _{jk_{4}})\right] ^{M}\left[
1+d(\xi _{jk_{4}},\xi _{jk_{1}})\right] ^{M}}
\end{equation*}%
\begin{equation*}
\leq CD^{4}B^{\left( 2-4\gamma \right) j}.
\end{equation*}%

in view of (\ref{4}), choosing $M\geq 3$. Now it is easy to see that%
\begin{equation*}
Cum_{4}\left\{ \frac{\widetilde{\Gamma }_{j;s}^{CP}-\widetilde{\Gamma }%
_{j;s}^{AP}}{\sqrt{Var\left\{ \widetilde{\Gamma }_{j;s}^{CP}-\widetilde{%
\Gamma }_{j;s}^{AP}\right\} }}\right\} \rightarrow 0\text{ },
\end{equation*}%
whence the Proposition is established, again resorting to results in (\cite%
{np})
\end{proof}

\begin{remark}
Note that $Var\left\{ \widetilde{\Gamma }_{j;s}^{CP}\right\} ,Var\left\{
\widetilde{\Gamma }_{j;s}^{AP}\right\} ,2Cov\left\{ \widetilde{\Gamma }%
_{j;s}^{CP},\widetilde{\Gamma }_{j;s}^{AP}\right\} $ are robust to
misspecification of the noise, because Variance and Covariance are
translation invariant. It follows that the denominator can (once again) be
consistently estimated by subsampling techniques, as in \cite{bkmpBer}.
\end{remark}

Under the alternative of noise misspecification, we have easily
\begin{equation*}
\frac{\widetilde{\Gamma }_{j;s}^{CP}-\widetilde{\Gamma }_{j;s}^{AP}}{\sqrt{%
Var\left\{ \widetilde{\Gamma }_{j;s}^{CP}-\widetilde{\Gamma }%
_{j;s}^{AP}\right\} }}\rightarrow _{d}N(\delta _{j},1)
\end{equation*}%
where%
\begin{equation*}
\delta _{j}:=\frac{E|\beta _{jk;sN_{r}}|^{2}-\Gamma _{j;sN_{r}}}{\sqrt{%
Var\left\{ \widetilde{\Gamma }_{j;s}^{CP}-\widetilde{\Gamma }%
_{j;s}^{AP}\right\} }}
\end{equation*}%
where $\Gamma _{j;sN_{r}}$ is the bias-correction term which is wrongly
adopted. The derivation of the power properties of this testing procedure is
then immediate.

As a final comment, we notice that throughout this paper we have only been
considering estimation and testing for the total angular power spectrum $%
C_{l}=C_{lE}+C_{lB}.$ The separate estimation of the two components ($E$ and
$B$ modes) is of great interest for physical applications, and will be
addressed in future work.

\end{document}